\newtheorem{theorem}{Theorem}[section]
\newtheorem{lemma}[theorem]{Lemma}
\newtheorem{proposition}[theorem]{Proposition}
\newtheorem{corollary}[theorem]{Corollary}
\theoremstyle{definition}
\newtheorem{definition}[theorem]{Definition}
\theoremstyle{remark}
\numberwithin{equation}{section}
\newcommand{\key}{\ensuremath{\kappa}}
\newcommand{\lock}{\ensuremath{\mathfrak{L}}}
\newcommand{\comp}[1]{\ensuremath{\mathbf{#1}}}
\newcommand{\wt}{\ensuremath{\mathrm{\mathbf{wt}}}}
\renewcommand{\aa}{\ensuremath{\comp{a}}}
\newcommand{\ee}{\ensuremath{\reflectbox{e}}}
\newcommand{\LKT}{\ensuremath{\mathrm{LKT}}}
\newcommand{\KKT}{\ensuremath{\mathrm{KKT}}}
\newcommand{\KD}{\ensuremath{\mathrm{KD}}}
\renewcommand{\flat}{\ensuremath{\mathrm{flat}}}
\newlength\cellsize \setlength\cellsize{10\unitlength}
\newcommand\cellify[1]{\def\thearg{#1}\def\nothing{}%
\ifx\thearg\nothing\vrule width0pt height\cellsize depth0pt%
  \else\hbox to 0pt{\usebox2\hss}\fi%
  \vbox to 10\unitlength{\vss\hbox to 10\unitlength{\hss$#1$\hss}\vss}}
\newcommand\tableau[1]{\vtop{\let\\=\cr
\setlength\baselineskip{-10000pt}
\setlength\lineskiplimit{10000pt}
\setlength\lineskip{0pt}
\halign{&\cellify{##}\cr#1\crcr}}}
\begin{document}


\title[Lock crystals]{Locks fit into keys: \\ a crystal analysis of Lock polynomials}  

\author[Wang]{George Wang}
\address{Department of Mathematics, University of Pennsylvania, 209 S. 33rd St., Philadelphia, PA 19104-6317, U.S.A.}
\email{wage@math.upenn.edu}
\thanks{Supported by NSF DGE-1845298.}

\subjclass[2010]{Primary 05E05}


\keywords{Crystals, lock polynomials, key polynomials, Kohnert diagrams}

\begin{abstract}
Lock polynomials and lock Kohnert tableaux are natural analogues to key polynomials and key Kohnert tableaux, respectively.
In this paper, we compare lock polynomials to the much-studied key polynomials and show that the difference of a key polynomial and lock polynomial for the same composition is monomial positive. We also examine the conditions for which key and lock polynomials are symmetric or quasisymmetric. We accomplish these goals combinatorially using key Kohnert tableaux and lock Kohnert tableaux. In particular, for the difference of a key minus a lock, we focus on the behavior of crystal operators on Kohnert tableaux. The Type A Demazure crystal can be realized on the vertex set of key Kohnert tableaux, and we show with an explicit combinatorial definition that a similar crystal-like structure exists on the vertex set of lock Kohnert tableaux. Finally, we construct an injective, weight-preserving map from lock Kohnert tableaux to key Kohnert tableaux that intertwines the crystal operators.

\end{abstract}

\maketitle

%
\section{Introduction}
%
\label{sec:introduction}

Assaf and Searles \cite{AS18b} in their work on Kohnert diagrams and tableaux defined lock Kohnert tableaux and lock polynomials as analogues to key Kohnert tableaux and to the ubiquitous key polynomials. In this paper, we aim to ask and partially answer a natural question about such an analogue: what properties do locks and keys share, and how are they related to each other? We begin by examining the conditions for lock polynomials and key polynomials to be symmetric or quasisymmetric. These results follow from a careful examination of the combinatorial definitions of lock and key Kohnert tableaux. 

Our main result is that the difference of a key polynomial of a particular weak composition minus a lock polynomial of the same composition is monomial positive. We prove this result purely combinatorially using the Demazure crystal structure on key Kohnert tableaux, which we will call a key crystal, and an analagous, crystal-like structure on lock Kohnert tableaux that we will construct and refer to as a lock crystal. We prove that the lock crystal is connected and that there is an injective, weight-preserving algorithm from lock Kohnert tableaux to key Kohnert tableaux that intertwines their crystal operators. We accomplish this by utilizing the rectification operators of Assaf and Gonz\'alez in conjunction with our novel unlock operators. In particular, we will see that the unlock operators turn out to act on the underlying diagram of a lock Kohnert tableau in the same way as rectification operators, however, since unlock operators act on \emph{labeled} diagrams while rectification operators act on \emph{unlabeled} diagrams, unlock operators allow us to track the movements of labels through our algorithm. A nice consequence of this algorithm is that the lock crystal forms a subcrystal of a key crystal for the same weak composition, which in turn is known to be a subcrystal of a normal crystal.

%
\section{Key Polynomials}
%
\label{sec:keys}

There are many bases for the polynomial ring $\mathbb{Q}[x_1, \ldots, x_n]$ which have deep geometric and representation theoretic significance. We begin with one such basis by defining it combinatorially using diagrams indexed by weak compositions.

A \emph{diagram} is an array of finitely many cells in $\mathbb{N}\times\mathbb{N}$, and a \emph{weak composition} is an ordered sequence of nonnegative integers written $\comp{a} = (a_1,a_2,\ldots, a_n)$.
The weight of a diagram $D$, denoted $\wt(D)$, is the weak composition whose $i$th part is the number of cells in row $i$. A diagram is a \emph{key diagram} if the rows are left justified. For each weak composition $\comp{a}$, there is a unique key diagram of weight $\comp{a}$, which we simply call the \emph{key diagram of $\comp{a}$}.

Starting from a particular diagram $D$, one can generate new diagrams using \emph{Kohnert moves}. A Kohnert move on a diagram takes the rightmost cell of a given row and moves the cell to the first open position below, jumping over other cells if necessary. Let $\KD(D)$ denote the set of all diagrams that can be obtained from $D$ by a sequence of Kohnert moves.

In the case of key diagrams, we call the set of diagrams generated by Kohnert moves on the key diagram of $\aa$ the set of \emph{key Kohnert diagrams of $\comp{a}$}. Kohnert \cite{Koh91} showed that Demazure characters (or key polynomials) could be seen as the generating polynomials of key Kohnert diagrams of different weak compositions.

Assaf and Searles \cite{AS18b} gave a description of \emph{key Kohnert tableaux}, which they called Kohnert tableaux. These tableaux are unique labelings for key diagrams that track the original position of each cell in a Kohnert diagram before any Kohnert moves are applied.

\begin{definition}
Given a weak composition $\comp{a}$ of length $n$, a \emph{key Kohnert tableau of content $\comp{a}$} is a diagram filled with entries $1^{a_1}, 2^{a_2}, \ldots, n^{a_n}$, one per cell, satisfying the following conditions:

\begin{enumerate}
	\item there is exactly one $i$ in each column from $1$ through $a_i$;
	\item each entry in row $i$ is at least $i$;
	\item the cells with entry $i$ weakly descend from left to right;
	\item if $i<j$ appear in a column with $i$ above $j$, then there is an $i$ in the column immediately to the right of and strictly above $j$
\end{enumerate}
\end{definition}

The set of key Kohnert tableaux of content $\comp{a}$ is denoted $\KKT(\comp{a})$. We call condition (2) the \emph{flagged} condition and say that a labeled Kohnert diagram (not just key Kohnert tableaux) satisfying this condition is flagged.
An occurrence of (4) in any labeled Kohnert diagram is called an \emph{inversion} and we say that $i$ and $j$ are inverted. We also use the notation $\mathbb{D}(T)$ to denote the underlying diagram for a given labeled diagram $T$.

\begin{figure}[ht]
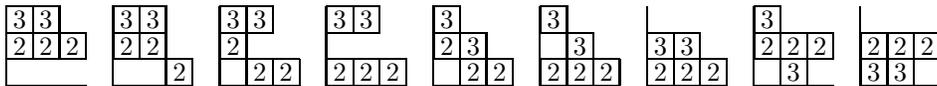

	\begin{displaymath}
	\begin{array}{ccccc cccc}
	\vline\tableau{3 & 3 \\ 2 & 2 & 2 \\ & \\\hline}&
	\vline\tableau{3 & 3 \\ 2 & 2 \\ & & 2 \\ \hline}&
	\vline\tableau{3 & 3 \\ 2 \\ & 2 & 2 \\ \hline}&
	\vline\tableau{3 & 3 \\ & \\ 2 & 2 & 2 \\ \hline}&
	\vline\tableau{3 \\ 2 & 3 \\ & 2 & 2\\ \hline}&
	\vline\tableau{3 \\ & 3 \\ 2 & 2 & 2 \\ \hline}&
	\vline\tableau{& \\ 3 & 3 \\ 2 & 2 & 2 \\ \hline}&
	\vline\tableau{3 & \\ 2 & 2 & 2 \\ & 3 \\ \hline}&
	\vline\tableau{& \\ 2 & 2 & 2 \\ 3 & 3 \\ \hline}
	\end{array}
	\end{displaymath}
	\caption{The set $\KKT(0,3,2)$.}
	\label{fig:KKT}
\end{figure}

Since each key Kohnert diagram has a unique such labeling, we may define key polynomials as generating polynomials over key Kohnert tableaux instead.

\begin{definition}
The \emph{key polynomial} indexed by the weak composition $\comp{a}$ is
\begin{equation}
\key_\comp{a} = \sum_{T\in \KKT(\comp{a})} x^{\wt(T)}
\end{equation}

\end{definition}

For example, we have from Figure \ref{fig:KKT} that
\begin{displaymath}
  \key_{(1,0,2,1)} = x_1^2x_2x_3 + x_1x_2^2x_3 + x_1x_2x_3^2 + x_1^2x_2x_4 + x_1x_2^2x_4 + x_1^2x_3x_4 + x_1x_2x_3x_4 + x_1x_3^2x_4 .
\end{displaymath}

Key polynomials are a polynomial generalization of the \emph{Schur polynomials}, which are an important basis of the \emph{symmetric polynomials}. Symmetric polynomials are those that are invariant under permutations of variable indices, and the theory of symmetric polynomials is a rich and beautiful subject, expertly introduced in \cite{Mac95, EC2}. We also have \emph{quasisymmetric polynomials} that lie between symmetric polynomials and the full polynomial ring. 
A polynomial is \emph{quasisymmetric} if the coefficients of any two monomials agree whenever their ordered sequence of nonzero exponents agree.

Macdonald \cite{Mac91} first observed that if $\comp{a}$ is weakly increasing, then the corresponding key polynomial is a Schur polynomial and therefore symmetric, which also follows from the more general result of Assaf and Searles \cite{AS18b}(Theorem~4.2).

\begin{theorem}[\cite{AS18b}]
  For a weak composition $\comp{a}$ of length $n$, the key polynomial $\key_{\comp{a}}$ is symmetric in $x_1,\ldots,x_n$ if and only if $\comp{a}$ is weakly increasing. Moreover, in this case, $\key_\aa = s_{\mathrm{rev}(\aa)}(x_1,\ldots,x_n)$.
  \label{thm:key-sym}
\end{theorem}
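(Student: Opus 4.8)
The plan is to treat the two implications separately, in each case working directly with the model $\KKT(\aa)$. The one fact we use about the ``generic'' tableau is that the key diagram of $\aa$, with its canonical labeling (every cell of row $i$ gets the entry $i$), lies in $\KKT(\aa)$: conditions (1)--(3) are immediate and condition (4) is vacuous, since in this labeling the entry of a cell equals its row, so a larger entry is never below a smaller one in the same column. In particular $x^{\aa}$ is always a monomial of $\key_\aa$.

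For the forward direction I would argue by contrapositive, and the only structural input is the flagged condition (2). For every $k$, all cells in the top $k$ rows of a tableau in $\KKT(\aa)$ carry entries in $\{n-k+1,\dots,n\}$, of which only $a_{n-k+1}+\dots+a_n$ are available, so every $T\in\KKT(\aa)$ satisfies
\[
  \sum_{j=n-k+1}^{n}\wt(T)_j \;\le\; \sum_{j=n-k+1}^{n}a_j
\]
for each $k\in\{1,\dots,n\}$. Now suppose $\aa$ is not weakly increasing and fix $i$ with $a_i>a_{i+1}$; let $\comp{b}$ be $\aa$ with the entries in positions $i$ and $i+1$ swapped. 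Taking $k=n-i$, whose index range $\{i+1,\dots,n\}$ contains $i+1$ but not $i$, gives $\sum_{j=i+1}^{n}b_j=\bigl(\sum_{j=i+1}^{n}a_j\bigr)+a_i-a_{i+1}>\sum_{j=i+1}^{n}a_j$, so no tableau in $\KKT(\aa)$ has weight $\comp{b}$ and $x^{\comp{b}}$ is not a monomial of $\key_\aa$. Since $\comp{b}$ is a rearrangement of $\aa$ while $x^{\aa}$ is a monomial, $\key_\aa$ cannot be symmetric. This half is short, and I am confident in it.

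For the converse, assume $\aa=(a_1\le\cdots\le a_n)$, so $\lambda:=\mathrm{rev}(\aa)$ is a partition, and I would prove the sharper claim $\key_\aa=s_\lambda(x_1,\dots,x_n)$ by exhibiting a weight-preserving bijection $\Phi\colon\KKT(\aa)\to\SSYT(\lambda)$; this suffices, as $s_\lambda=\sum_{U\in\SSYT(\lambda)}x^{\wt(U)}$ is visibly symmetric. The map I have in mind sends $T$ to the filling whose $c$th column is the set $R_c$ of rows occupied in column $c$ of $\mathbb{D}(T)$, written in increasing order top to bottom: Kohnert moves preserve the number of cells in each column, so $|R_c|$ equals the length $\lambda'_c$ of the $c$th column of $\lambda$, the columns of $\Phi(T)$ are strictly increasing for free (distinct entries), and $\wt(\Phi(T))_i=\#\{c: i\in R_c\}=\wt(T)_i$. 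The content of the argument is that $\Phi(T)$ is genuinely semistandard---equivalently, that the sorted sequences satisfy $(R_c)_{(r)}\le(R_{c+1})_{(r)}$ for every $r$---and that $\Phi$ is bijective, with an inverse that reads an SSYT of shape $\lambda$ and places its column entries back into rows so as to satisfy (1)--(4). I expect this to be the main obstacle: it requires showing that the inversion condition (4), together with (3), forces precisely the ``interleaving'' of consecutive columns, and conversely that the filling reconstructed from an arbitrary SSYT of shape $\lambda$ obeys all four conditions. I would also note that $\key_\aa=s_\lambda$ here is the classical statement that the Demazure character of a dominant weight is a Schur polynomial, obtainable as well from Kohnert's generating-function description; and that once the key crystal on $\KKT(\aa)$ is in hand the converse is immediate, since for weakly increasing $\aa$ this crystal is the full normal $\mathfrak{gl}_n$-crystal of highest weight $\lambda$, whose character is $s_\lambda$.
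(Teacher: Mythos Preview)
The paper does not actually prove this theorem: it is stated as a citation of \cite{AS18b}, Theorem~4.2 (with the preceding observation attributed to Macdonald~\cite{Mac91}), and no argument is supplied. So there is no ``paper's own proof'' to compare against; your proposal is a standalone attempt.

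Your forward direction is correct and complete. The inequality $\sum_{j>n-k}\wt(T)_j\le\sum_{j>n-k}a_j$, derived from the flagged condition~(2) together with the content constraint, is exactly what is needed, and the choice $k=n-i$ at a descent $a_i>a_{i+1}$ rules out the swapped monomial cleanly. This is essentially the same mechanism the paper exploits later in the proof of Proposition~\ref{prop:key-qsym} (the observation that Kohnert moves can only produce weights lexicographically earlier than $\aa$), phrased instead as a dominance-type bound.

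For the converse you have the right map, but you have correctly identified that the substance is entirely in the step you call ``the main obstacle'': proving that conditions~(3) and~(4) on $\KKT(\aa)$ force the interleaving $(R_c)_{(r)}\le(R_{c+1})_{(r)}$, and that the inverse construction from an arbitrary $U\in\SSYT(\lambda)$ lands back in $\KKT(\aa)$. As written this is a plan rather than a proof. The cleanest way to finish is to invoke the characterization of key Kohnert diagrams from \cite{AS18b} (the paper cites it as Lemma~2.2): a diagram $D$ with the correct column multiset arises from the key diagram of $\aa$ via Kohnert moves if and only if, for every column $c$ and row $r$, the number of cells of $D$ in column $c+1$ weakly above row $r$ does not exceed the number in column $c$ weakly above row $r$. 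That inequality is exactly equivalent to the sorted-column interleaving you need, so it gives both semistandardness of $\Phi(T)$ and surjectivity of $\Phi$ at once. Your alternative closing remarks (dominant Demazure characters, or the key crystal being the full normal crystal for weakly increasing $\aa$) are also legitimate routes and are closer in spirit to how the literature typically handles this identity.
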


We can also characterize directly when a key polynomial is quasisymmetric. 

\begin{proposition}\label{prop:key-qsym}
  For a weak composition $\comp{a}$ of length $n$, the key polynomial $\key_\comp{a}$ is quasisymmetric in $x_1, x_2,\ldots,x_n$ if and only if $\comp{a}$ has no zero parts or the parts are weakly increasing. 
\end{proposition}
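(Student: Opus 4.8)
The plan is to prove both directions by producing explicit monomials whose coefficients witness (non-)quasisymmetry. The key observation is that quasisymmetry is a condition on collapsing a monomial's sequence of nonzero exponents, so it suffices to find a single pair of monomials $x^{\comp{b}}$ and $x^{\comp{c}}$ with the same ordered sequence of nonzero exponents but different coefficients in $\key_\comp{a}$. I will use Theorem~\ref{thm:key-sym} to dispose of the weakly increasing case immediately: if $\comp{a}$ is weakly increasing then $\key_\comp{a}$ is symmetric, hence certainly quasisymmetric. And if $\comp{a}$ has no zero parts, the standard fact that $\key_\comp{a}$ is then a single monomial times a product of ``staircase'' factors — more precisely, I will argue directly from $\KKT(\comp{a})$ that every entry $i$ must occupy exactly rows $1,\dots$ forced by the flagged condition together with condition (1), so that $\KKT(\comp{a})$ reduces in a way making $\key_\comp{a}$ quasisymmetric — handles the other easy direction. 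Concretely, when all $a_i>0$, the flagged condition (2) forces a $1$ in row $1$ of each of the first $a_1$ columns, and inductively the presence of an entry $i$ in row $i$ constrains the tableau enough that $\key_\comp{a}$ is a product of complete homogeneous-like pieces in nested variable sets, which is quasisymmetric. (Alternatively one can cite that $\key_\comp{a} = x_1^{a_1}\cdots x_n^{a_n}\cdot(\text{something symmetric in the appropriate initial segments})$ when $\comp{a}$ has all positive parts, but I would prefer the self-contained tableau argument since the excerpt gives us $\KKT$ explicitly.)

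For the forward direction I must show: if $\comp{a}$ has a zero part and is not weakly increasing, then $\key_\comp{a}$ is not quasisymmetric. Suppose $a_k = 0$ for some $k$ but the sequence fails to be weakly increasing, so there is some descent $a_j > a_{j+1}$. The strategy is to exhibit a monomial $x^\comp{b}$ appearing in $\key_\comp{a}$ whose nonzero-exponent sequence also equals that of some $x^\comp{c}$ that does \emph{not} appear, or appears with a different coefficient. The natural candidate: the leading monomial $x^\comp{a}$ itself always appears (from the key diagram of $\comp{a}$ with its canonical labeling, where entry $i$ fills row $i$). Now I will produce a second weak composition $\comp{c}$ obtained by ``moving'' the mass of a positive part past the zero part $a_k$ — i.e. shifting which rows are occupied — in a way that preserves the multiset of positive parts (hence the sorted nonzero-exponent sequence) but lands outside $\wt(\KKT(\comp{a}))$. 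The point is that because entries in row $i$ must be $\geq i$ (condition (2)) but there is no entry $k$ at all (since $a_k=0$), certain rearrangements are blocked; this asymmetry between the ``below $k$'' and ``above $k$'' rows is exactly what breaks quasisymmetry, and it is present precisely when there is a genuine descent.

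The main obstacle I anticipate is making the forward direction airtight: I need to choose the witnessing pair $\comp{b}, \comp{c}$ cleverly enough that checking ``$x^\comp{b}$ appears'' and ``$x^\comp{c}$ does not appear (or appears with strictly smaller coefficient)'' is genuinely easy from the four conditions defining $\KKT(\comp{a})$, rather than requiring a delicate count. I expect the cleanest route is to take $\comp{b} = \comp{a}$ and let $\comp{c}$ be the composition agreeing with $\comp{a}$ except that a descent pair $(a_j, a_{j+1})$ with $a_j > a_{j+1}$ and a zero somewhere is modified so that $\comp{c}$ has the same positive-part multiset but its support sits ``too high'' relative to the flag — then condition (2) immediately forbids $x^\comp{c}$ while $x^\comp{a}$ is the default tableau. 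I will need to verify no \emph{other} tableau in $\KKT(\comp{a})$ has weight $\comp{c}$ either, which should follow because $\comp{c}$'s first $i$ with $c_i \neq 0$ lies below the first $i$ with $a_i \neq 0$... the details here, and the precise interplay with the position of the zero part, are where the real work lies, but the structure of the argument — symmetric/quasisymmetric easy direction via Theorem~\ref{thm:key-sym} plus a tableau count, hard direction via an explicit flag-violating monomial — is what I would commit to.
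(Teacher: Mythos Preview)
Your plan for the weakly increasing case and for the subcase where some $a_i>a_{i+1}=0$ is fine and matches the paper. But the remaining pieces have problems.

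For the no-zero-parts direction you are working much harder than necessary, and the factorization you allude to is not something you have justified. The paper's argument is one line: when every $a_i>0$, column $1$ of the key diagram has a cell in each of rows $1,\ldots,n$, and Kohnert moves cannot disturb this (a column-$1$ cell is rightmost only if it is alone in its row, and then the cell immediately below blocks it). Hence every $T\in\KKT(\aa)$ has $\wt(T)_i>0$ for all $i$, so each monomial $x^{\wt(T)}$ is individually quasisymmetric in $x_1,\ldots,x_n$, and therefore so is their sum.

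The genuine gap is in the non-quasisymmetric direction. You commit to taking $\comp{b}=\aa$ and seeking $\comp{c}$ with the same nonzero-exponent sequence whose support ``sits too high relative to the flag.'' This works when some $a_i>a_{i+1}=0$: swapping those parts produces a $\comp{c}$ needing more cells in rows $\geq i+1$ than the key diagram has, which Kohnert moves cannot achieve. But in the complementary case $\aa=(0^m,a_{m+1},\ldots,a_n)$ with all later parts positive and a descent $a_j>a_{j+1}>0$, the composition $\aa$ already has the \emph{highest} possible support among all compositions sharing its nonzero sequence. Every other such $\comp{c}$ has support shifted \emph{downward}, so there is no flag obstruction, and indeed these $\comp{c}$ typically do occur. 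For $\aa=(0,2,1)$ one has $\key_\aa=x_2^2x_3+x_1x_2x_3+x_1x_2^2+x_1^2x_3+x_1^2x_2$, and all three compositions with nonzero sequence $(2,1)$ appear with coefficient $1$; quasisymmetry fails instead because $x_1x_2^2$ appears while $x_2x_3^2$ does not, a witness coming from the nonzero sequence $(1,2)$, not from that of $\aa$. The paper handles this case by abandoning $\comp{b}=\aa$: it performs explicit Kohnert moves to reach a weight $\comp{b}$ with nonzero sequence $(a_{m+1},\ldots,a_{j-1},a_{j+1},a_j,a_{j+2},\ldots,a_n)$, and then observes that the quasisymmetry partner $s_j\aa$ (swap parts $j$ and $j+1$) would require strictly more cells in rows $\geq j+1$ than the key diagram of $\aa$ contains, which is impossible under Kohnert moves. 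You will need this construction, or something equivalent; the single witness $\comp{b}=\aa$ does not suffice.
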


\begin{proof}
We first consider when $\aa$ is weakly increasing. By Theorem \ref{thm:key-sym}, $\key_\aa$ is symmetric and so it is also quasisymmetric.

Next suppose that $\aa$ has no zero parts. The diagram of $\aa$ has a box in every row from $1$ to $n$ in the leftmost column, and any sequence of Kohnert moves preserves this property. Then $x^{\wt(T)}$ for any key Kohnert tableau $T$ of $\aa$ has positive exponent for $x_1,\ldots, x_n$ and is as a result quasisymmetric in $x_1,\ldots,x_n$. Therefore, $\key_\aa$ is a sum of quasisymmetric monomials and is also quasisymmetric.

Finally, suppose that $\aa$ is not weakly increasing and has at least one part equal to zero. We consider two cases: either there exists some index $i$ for which $a_i>a_{i+1} = 0$, or there does not. 

Suppose first that such an index exists. Observe that for a given diagram $D$, $\wt(D)$ comes in later lexicographic order than the weights of any diagrams resulting from a sequence of Kohnert moves on $D$. Then since $\key_\aa$ contains the term
$$x_1^{a_1} \cdots x_i^{a_i} x_{i+1}^{a_{i+1}}\cdots x_n^{a_n} = x_1^{a_1} \cdots x_i^{a_i} x_{i+1}^{0}\cdots x_n^{a_n}$$
but not the term
$x_1^{a_1} \cdots x_i^{0} x_{i+1}^{a_{i}}\cdots x_n^{a_n},$
$\key_\aa$ is not quasisymmetric.

Now suppose that no such index $i$ exists, so that $\aa$ has some positive number of leading zeroes followed by exclusively nonzero parts. Choose $j$ such that $a_j > a_{j+1} > 0$. We can apply Kohnert moves to the diagram of $\aa$ to push all nonempty rows below row $j$ down by exactly one space, then apply $a_{j+1}$ Kohnert moves to row $j+1$ to move the boxes in row $j+1$ to row $j-1$. Now we have a key Kohnert diagram with associated monomial 
$$x_1^{a_2} \cdots x_{j-2}^{a_{j-1}} x_{j_1}^{a_{j+1}} x_j^{a_j} x_{j+1}^0 x_{j+2}^{a_{j+2}} \cdots x_n^{a_n}.$$
If $\key_aa$ were quasisymmetric, then we would also need the monomial 
$$x_1^{a_1} \cdots x_{j-1}^{a_{j-1}} x_j^{a_{j+1}} x_{j+1}^{a_j} x_{j+2}^{a_{j+2}} \cdots x_n^{a_n}.$$
However, the weight of the key Kohnert diagram that this monomial would be associated with would have a later lexicographic order than $\aa$, which contradicts our observation above that Kohnert moves on a diagram must produce weights with an earlier lexicographic order. Therefore, $\key_\aa$ is not quasisymmetric.

\end{proof}

Notably, the only key polynomials that are quasisymmetric but not symmetric are those with nonzero parts not weakly increasing and with no zero parts.

\section{Lock polynomials}
\label{sec:locks}

Assaf and Searles \cite{AS17} introduced \emph{lock polynomials} as a natural analogue to the combinatorial definition of key polynomials. Given a weak composition $\aa$, the \emph{lock diagram} of $\aa$ is the unique right justified diagram with weight $\aa$. The \emph{lock Kohnert diagrams} of $\aa$ are all diagrams that can be obtained from applying a sequence of Kohnert moves to the lock diagram of $\aa$. Lock Kohnert diagrams similarly have unique labelings, which we call \emph{lock Kohnert tableaux}.

\begin{definition}[\cite{AS17}]
Given a weak composition of length $n$, a \emph{lock Kohnert tableau of content $\aa$} is a diagram filled with entries $1^{a_1}, 2^{a_2}, \ldots, n^{a_n}$, one per cell, satisfying the following conditions:
\begin{enumerate}
	\item there is exactly one $i$ in each column from $\max(\aa) - a_i+1$ through $\max(\aa)$;
	\item each entry in row $i$ is at least $i$;
	\item the cells with entry $i$ weakly descend from left to right;
	\item the labeling strictly decreases down columns.
\end{enumerate}
\end{definition}
These are unique labelings because condition (1) fixes the set of labels in each column and condition (4) fixes their order within each column. The set of lock Kohnert tableaux of content $\aa$ is denoted $\LKT(\aa)$, and we reiterate from earlier that we use the notation $\mathbb{D}(T)$ to denote the underlying diagram for a given labeled diagram $T$. We also use $T_\aa$ to denote the unique lock Kohnert tableau with weight $\flat(\aa)$ and content $\aa$.

\begin{figure}[ht]
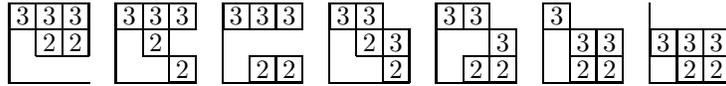


\begin{displaymath}
\begin{array}{ccccc cc}
	\vline\tableau{3 & 3 & 3 \\ & 2 & 2 \\ & \\ \hline}&
	\vline\tableau{3 & 3 & 3 \\ & 2 \\ & & 2\\ \hline}&
	\vline\tableau{3 & 3 & 3 \\ & \\ & 2 & 2\\ \hline}&
	\vline\tableau{3 & 3\\ & 2 & 3\\ & & 2\\ \hline}&
	\vline\tableau{3 & 3 \\ & & 3 \\ & 2 & 2 \\ \hline}&
	\vline\tableau{3\\ & 3 & 3\\ & 2 & 2\\ \hline}&
	\vline\tableau{& \\ 3 & 3 & 3 \\ & 2 & 2 \\ \hline}
\end{array}
\end{displaymath}
\caption{The set $\LKT(0,2,3)$.}
\label{fig:LKT}
\end{figure}

We define lock polynomials as the generating polynomials of lock Kohnert tableaux.

\begin{definition}[\cite{AS17} \label{def:LKT}]
The \emph{lock polynomial} indexed by the weak composition $\aa$ is
\begin{equation}
\lock_\aa = \sum_{T\in \LKT(\aa)} x^{\wt(T)}.
\end{equation}
\end{definition}
For example, we have from Figure \ref{fig:LKT} that
$$\lock_{(0,2,3)} = x_2^2 x_3^3 + x_1 x_2 x_3^3 + x_1^2 x_3^3 + x_1 x_2^2 x_3^2 + x_1^2 x_2 x_3^2 + x_1^2 x_2^2 x_3 + x_1^2 x_2^3.$$
Lock polynomials also form a basis for the full polynomial ring \cite{AS17}, and they coincide with key polynomials if the nonzero parts of $\aa$ are weakly decreasing.

\begin{theorem}[\cite{AS17}, Theorem 6.12]
Given a weak composition $\aa$ such that its nonzero parts are weakly decreasing, we have
\begin{equation}
\lock_\aa = \key_\aa.
\end{equation}
\label{thm:lock=key}
\end{theorem}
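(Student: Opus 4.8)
The plan is to prove that when the nonzero parts of $\aa$ are weakly decreasing, the map $T \mapsto \mathbb{D}(T)$ sets up a bijection between $\LKT(\aa)$ and $\KKT(\aa)$, up to the routine observation that each underlying diagram in either set has a \emph{unique} valid labeling of its respective type. Since both $\key_\aa$ and $\lock_\aa$ are generating functions of $x^{\wt(T)}$ and $\wt(T)$ depends only on $\mathbb{D}(T)$, establishing that the two sets of underlying diagrams coincide will immediately give $\lock_\aa = \key_\aa$.

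First I would reduce to the claim $\KD(D_{\mathrm{key}}) = \KD(D_{\mathrm{lock}})$, where $D_{\mathrm{key}}$ and $D_{\mathrm{lock}}$ are the key and lock diagrams of $\aa$. Here I want to use the hypothesis: when the nonzero parts are weakly decreasing, the key diagram of $\aa$ is already the underlying diagram of $T_\aa$ up to horizontal shifts within rows — more precisely, I would argue that the lock diagram of $\aa$ can be reached from the key diagram of $\aa$ by Kohnert moves, and vice versa. Because the nonzero rows are weakly decreasing in length and the key diagram is left-justified while the lock diagram is right-justified, one can check directly that sliding the cells of each nonzero row appropriately is achievable through legal Kohnert moves in both directions (the rightmost-cell-drops-down rule composes to give horizontal repositioning once enough vertical room is available). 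Once $D_{\mathrm{key}}$ and $D_{\mathrm{lock}}$ generate each other, they generate the same Kohnert closure, so $\KD(D_{\mathrm{key}}) = \KD(D_{\mathrm{lock}})$.

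Next I would verify the labeling compatibility: for a diagram $D$ in this common Kohnert closure, the unique key Kohnert labeling and the unique lock Kohnert labeling coincide. The cleanest route is to observe that for weakly-decreasing-nonzero-part compositions, the inversion condition (4) of a key Kohnert tableau forces the strictly-decreasing-down-columns condition (4) of a lock Kohnert tableau and conversely, given that conditions (2) and (3) are literally the same in both definitions and condition (1) fixes the same multiset of entries in each column (for a left-justified/right-justified pair of diagrams of this shape the column contents match after accounting for the shape of $D$). So a labeling satisfies the key conditions if and only if it satisfies the lock conditions, hence the unique labelings agree cell for cell, and in particular have the same weight.

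The main obstacle I anticipate is the first step — showing carefully that the key and lock diagrams of $\aa$ lie in each other's Kohnert closures using only the restrictive "rightmost cell of a row drops straight down" move, and that this works precisely when the nonzero parts are weakly decreasing (and can fail otherwise). This requires a small but genuine combinatorial argument: one must find an explicit order in which to drop cells (likely processing rows from bottom to top, or temporarily lowering lower rows to create landing space) so that every intermediate configuration is a legal diagram and each move is a legal Kohnert move. I would handle this by induction on the number of nonzero rows, peeling off the topmost nonzero row last, and keeping track of the fact that weak decrease of row lengths guarantees that when a row's rightmost cell drops, it lands in a column index already occupied below (so the horizontal target is reachable). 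An alternative, if the direct argument gets unwieldy, is to cite or adapt the corresponding statement from Assaf–Searles, since Theorem~\ref{thm:lock=key} is their result and its proof presumably already contains this diagram-level comparison; but I would prefer the self-contained Kohnert-closure argument as it also foreshadows the unlock/rectification machinery used later in the paper.
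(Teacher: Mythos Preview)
The paper does not prove this theorem; it is quoted from \cite{AS17} without argument, so there is no in-paper proof to compare against. I will therefore focus on the correctness of your proposal.

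There is a genuine gap, and it appears already in the first step. Kohnert moves act \emph{vertically}: the rightmost cell of a row drops straight down within its own column. In particular, every Kohnert move preserves the multiset of column indices of the diagram. But the key diagram and the lock diagram of $\aa$ have different column profiles whenever the nonzero parts are not all equal. For $\aa=(0,3,2)$, the key diagram has two cells in column~$1$ and one in column~$3$, while the lock diagram has one cell in column~$1$ and two in column~$3$. Hence neither diagram lies in the Kohnert closure of the other, and in fact $\KD(D_{\mathrm{key}})\cap\KD(D_{\mathrm{lock}})=\varnothing$ in this example. Your parenthetical that ``the rightmost-cell-drops-down rule composes to give horizontal repositioning'' is simply false: no composition of Kohnert moves ever changes the column of a cell. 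So the proposed reduction $\KD(D_{\mathrm{key}})=\KD(D_{\mathrm{lock}})$ cannot hold, and the bijection you want is not the identity on underlying diagrams.

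The second step inherits the same problem. Condition~(1) for key tableaux puts label $i$ in columns $1,\ldots,a_i$, while condition~(1) for lock tableaux puts label $i$ in columns $\max(\aa)-a_i+1,\ldots,\max(\aa)$. These column sets differ (again already for $\aa=(0,3,2)$), so the two labelings are not defined on the same set of cells and cannot ``agree cell for cell.'' The equality $\lock_\aa=\key_\aa$ really is a nontrivial weight-preserving bijection between two distinct families of diagrams; to prove it one must construct such a bijection (as in \cite{AS17}) or argue via an expansion into a common basis, not identify the diagram sets.
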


As with key polynomials, lock polynomials are not always symmetric or quasisymmetric, however we can characterize exactly when each happens. For the quasisymmetric case, the condition is the same as for key polynomials.

\begin{proposition} \label{prop:lock-qsym}
For $\aa$ a weak composition of length $n$, $\lock_\aa$ is quasisymmetric in $x_1, x_2,\ldots, x_n$ if and only if $\aa$ has no zero parts or the parts are weakly increasing.
\end{proposition}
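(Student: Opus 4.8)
The plan is to mirror the structure of the proof of Proposition~\ref{prop:key-qsym}, adapting each case to lock diagrams. The two ``positive'' directions transfer with little change: if $\aa$ is weakly increasing, then by Theorem~\ref{thm:key-sym} the nonzero parts of $\aa$ (read in order) are weakly increasing, hence in particular not weakly decreasing unless all equal, so a cleaner route is to note that $\lock_\aa$ is symmetric exactly when we expect (this will be proved separately); but to keep the argument self-contained here I would instead argue directly from the definition of $\LKT(\aa)$ that when $\aa$ is weakly increasing the lock polynomial is symmetric, or simply invoke the lock-symmetry proposition if it precedes this one. For the case that $\aa$ has no zero parts: the lock diagram of $\aa$ has, in every row $1$ through $n$, at least one cell, and moreover one checks that Kohnert moves preserve the property of having a cell in every nonempty row's range --- more precisely, each row $1,\dots,n$ retains at least one cell after any sequence of Kohnert moves, since a Kohnert move only moves a cell downward into an empty position and cannot empty a row that had a cell unless that cell jumps out, but the bottom-most occupied row can never lose its only cell. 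Hence every $T \in \LKT(\aa)$ has $\wt(T)$ with all parts in $1,\dots,n$ positive, so $x^{\wt(T)}$ is quasisymmetric in $x_1,\dots,x_n$, and therefore so is the sum $\lock_\aa$.

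For the ``negative'' direction, assume $\aa$ is not weakly increasing and has at least one zero part; I must produce a pair of monomials witnessing the failure of quasisymmetry. I would keep the same two-case split as in Proposition~\ref{prop:key-qsym}. \emph{Case 1: there is an index $i$ with $a_i > a_{i+1} = 0$.} Here I use the same lexicographic observation: for any diagram $D$, applying Kohnert moves can only produce diagrams whose weight is lexicographically earlier (since a Kohnert move strictly decreases the weight in lex order --- a cell drops from a higher row to a lower one). The lock diagram $T_\aa$ itself is not the right starting point for the weight argument since its weight is $\flat(\aa)$, not $\aa$; instead I note that $\LKT(\aa)$ contains a tableau of weight $\aa$ (the labeling of the \emph{left}-justified-looking configuration obtained by sliding everything as far up-left as allowed --- in fact the lock diagram of $\aa$ can be transformed, but one must check the resulting diagram is a valid lock Kohnert diagram, i.e.\ reachable from the lock diagram by Kohnert moves). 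Actually the cleanest statement: among all weak compositions $\comp{b}$ with $x^{\comp{b}}$ appearing in $\lock_\aa$, the lexicographically largest is $\aa$ itself, and it does appear (take the lock Kohnert tableau whose underlying diagram is the lock diagram of $\aa$ --- wait, that has weight $\flat(\aa)$). So I instead argue: the row-content multiset of any lock Kohnert diagram of $\aa$, sorted, equals $\flat(\aa)$ sorted... no. Let me restate the genuine claim I would prove and use: \emph{$\aa$ is a weight occurring in $\lock_\aa$, and it is the lex-largest such weight.} That $\aa$ occurs: start from the lock diagram of $\flat(\aa)$ (weight $\flat(\aa)$) and apply Kohnert moves cyclically/downward to rearrange the row sizes into the order prescribed by $\aa$; one verifies such a sequence exists because we are only moving blocks of a right-justified staircase downward. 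That it is lex-largest: any Kohnert move lowers the weight in lex order, and every element of $\LKT(\aa)$ is obtained from the lock diagram of $\flat(\aa)$... but $\flat(\aa)$ need not be lex-comparable to $\aa$ favourably. The honest fix is to observe that the \emph{maximum} in lex order over $\KD$ of the lock diagram of $\flat(\aa)$ equals the lex-max over all rearrangements that are "downward-reachable," which includes $\aa$ when $\aa$ has a zero in position $i+1$ following a positive $a_i$. This is exactly the subtle point.

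\textbf{The main obstacle} is precisely this bookkeeping around which weights occur in $\lock_\aa$ and which is lex-maximal, because unlike the key case --- where the key diagram of $\aa$ literally \emph{has} weight $\aa$ and sits at the lex-top --- the lock diagram of $\aa$ has weight $\flat(\aa)$, so the convenient ``top of the crystal / top in lex order'' anchor is displaced. I expect to resolve it by establishing a short lemma: \emph{for any weak composition $\aa$, the lexicographically greatest weight of an element of $\LKT(\aa)$ is $\flat(\aa)$ read in an order determined by $\aa$} --- concretely, that $x^{\comp{b}} $ occurs in $\lock_\aa$ only for $\comp{b}$ lex-at-most the composition obtained by ``left-packing'' $\aa$, and that certain target rearrangements are achievable. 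Once that lemma is in hand, \emph{Case 2} (some positive leading zeros then all-positive parts, with $a_j > a_{j+1} > 0$ for some $j$) runs exactly as in Proposition~\ref{prop:key-qsym}: exhibit a lock Kohnert diagram realizing the monomial $x_1^{a_2}\cdots x_{j-1}^{a_j} x_j^{a_{j+1}} x_{j+1}^{0} x_{j+2}^{a_{j+2}}\cdots x_n^{a_n}$ by pushing the rows below row $j$ down by one and sliding row $j{+}1$'s cells up to row $j{-}1$ (the same Kohnert moves are legal on the lock diagram), then observe its quasisymmetric partner would force a weight lex-later than anything reachable, a contradiction. So the skeleton is: (i) quasisymmetric positive directions as above; (ii) the lex/occurrence lemma for lock diagrams; (iii) Case 1 and Case 2 verbatim-style arguments using (ii). I would write (ii) carefully and treat (i) and (iii) briskly.
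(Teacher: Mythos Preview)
Your proposal has two genuine problems, one of which is a real error and one of which is self-inflicted confusion.

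\medskip

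\textbf{The weakly increasing case is wrong.} You plan to argue that $\lock_\aa$ is symmetric when $\aa$ is weakly increasing, and hence quasisymmetric. But $\lock_\aa$ is \emph{not} symmetric for general weakly increasing $\aa$: the paper's own example $\aa=(0,2,3)$ gives $\lock_{(0,2,3)} = x_2^2x_3^3 + x_1x_2x_3^3 + x_1^2x_3^3 + x_1x_2^2x_3^2 + x_1^2x_2x_3^2 + x_1^2x_2^2x_3 + x_1^2x_2^3$, which lacks $x_2^3x_3^2$ and so is not symmetric in $x_1,x_2,x_3$. (Indeed the very next proposition in the paper shows $\lock_\aa$ is symmetric only for $\aa = 0^{n-k}\times m^k$.) The paper handles this case by a different mechanism: it defines row-swap operators $p_i,d_i$ that move an entire row into an adjacent empty row, observes that the orbits of these operators have generating polynomial equal to a monomial quasisymmetric polynomial, and then checks that when $\aa$ is weakly increasing these operators preserve membership in $\LKT(\aa)$. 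This packetization argument is the missing idea.

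\medskip

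\textbf{The negative direction: you have confused yourself.} You write that ``the lock diagram of $\aa$ has weight $\flat(\aa)$, not $\aa$'' and spend a page worrying about which weight is lex-maximal. This is simply false: the lock diagram of $\aa$ is the right-justified diagram with $a_i$ cells in row $i$, so its weight \emph{is} $\aa$. You appear to have conflated the lock diagram with the tableau $T_\aa$ (the unique LKT of weight $\flat(\aa)$), which is a different object. Once this is corrected, the lex-order argument from Proposition~\ref{prop:key-qsym} goes through verbatim: $x^\aa$ appears in $\lock_\aa$ (from the lock diagram itself), Kohnert moves strictly lower the weight in lex order, and the two subcases proceed exactly as for keys. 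The paper says as much in one line.

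\medskip

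\textbf{Minor.} For the no-zero-parts case, your argument that ``every row retains a cell under Kohnert moves'' is more than you need and your justification of it is not a proof. The cleaner observation (used in the paper) is that if every $a_i>0$ then every row has its rightmost cell in column $\max(\aa)$, and that entire column is full, so \emph{no Kohnert move is possible at all}; hence $\lock_\aa$ is a single monomial with all exponents positive.
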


\begin{proof}
If there are no zero parts, then no Kohnert moves can be done on the lock diagram of $\aa$. Therefore, $\lock_\aa$ consists of a single monomial with positive exponent for all variables $x_1,\ldots,x_n$, and therefore $\lock_\aa$ is quasisymmetric.

Now suppose that $\aa$ is weaky increasing with leading zeroes. Define maps $p_i$ and $d_i$ for $1 \leq i < n$ as follows. If row $i$ (row $i+1$) has at least one box in it and row $i+1$ (row $i$) is empty, $p_i$ ($d_i$) moves all boxes from row $i$ (row $i+1$) to row $i+1$ (row $i$), preserving their columns and labels, otherwise $p_i$ ($d_i$) does nothing. 
We can think of these as colored edges connecting different labeled diagrams, where a connected component has generating polynomial equal to a monomial quasisymmetric polynomial in $n$ variables. Therefore, it is sufficient to show that when at least one labeled diagram in a connected component is a lock Kohnert tableau with content $\aa$, every labeled diagram in that connected component is a lock Kohnert tableau with content $\aa$, since then summing over the connected components with lock Kohnert tableau gives the lock polynomial as a sum of monomial quasisymmetrics.

When $d_i$ is applied to a lock Kohnert tableau of content $\aa$, it is easy to check that all four properties in Definition \ref{def:LKT} are preserved. For $p_i$, properties (1), (3), and (4) are also clear by construction. For property (2), suppose that some box in column $j$ with label $i$ is pushed to row $i+1$ by $p_i$. By properties (1) and (2) and the fact that $\aa$ is weakly increasing, there must be boxes with labels $i+1, i+2, \ldots, n$ strictly above row $i+1$ in column $j$. However, since there cannot be boxes above row $n$, we must have $n-i$ boxes fitting into $n-i-1$ rows, which is impossible. Therefore, property (2) must also hold, and any labeled diagram connected to a lock Kohnert tableau of content $\aa$ by a sequence of $p_i, d_i$ is also a lock Kohnert diagram of content $\aa$. 

Finally, consider the case where the parts of $\aa$ are not weakly increasing and at least one part is equal to zero. The proof in this case is essentially identical to that of the same case in the proof of Proposition \ref{prop:key-qsym} and the analagous conclusion follows, that the lock polynomial of $\aa$ is not quasisymmetric in this case.

\end{proof}

Symmetry for lock polynomials is less common than for key polynomials, as seen by comparing Theorem \ref{thm:key-sym} with the following.

\begin{proposition}
For $\aa$ a weak composition of length $n$, the lock polynomial $\lock_\aa$ is symmetric in $x_1,x_2,\ldots,x_n$ if and only if $\aa = 0^{n-k}\times m^k$ for some integers $m,k >0$ and $k \leq n$. Moreover, in this case, we have $\lock_\aa = s_{m^k}(x_1,\ldots,x_n)$.
\end{proposition}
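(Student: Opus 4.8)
The plan is to prove both directions by isolating exactly which lock Kohnert tableaux exist and what their weights can be. First I would observe that if $\aa$ has a positive part followed (not necessarily immediately) by a strictly smaller positive part, or a positive part below a zero part, then the argument from Propositions~\ref{prop:key-qsym} and~\ref{prop:lock-qsym} already shows $\lock_\aa$ fails to be quasisymmetric, hence not symmetric; so I may assume $\aa$ is $0^{n-k}$ followed by $k$ equal nonzero parts only in the ``interesting'' remaining case, and I must still rule out the genuinely non-quasisymmetric-or-not configurations. More carefully: the only $\aa$ whose lock polynomial could be symmetric must have weakly increasing parts (else, as in the quasisymmetric proof, a lexicographic-order obstruction kills symmetry), and among weakly increasing $\aa$ I need to show that symmetry forces all nonzero parts to be \emph{equal}. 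For this I would exhibit an explicit lock Kohnert tableau whose weight is not a permutation-closure-compatible exponent: if $a_j < a_{j+1}$ are two consecutive nonzero parts, the monomial $x^\aa$ itself appears (from $T_\aa$, which has weight $\flat(\aa)=\aa$ here since $\aa$ is already sorted with zeros leading), but the monomial obtained by swapping the exponents in positions $j$ and $j+1$ would require a lock Kohnert diagram of later lexicographic weight than any reachable by Kohnert moves from the lock diagram of $\aa$ — the same observation used twice already — so symmetry fails. Combining, symmetry implies $\aa = 0^{n-k}\times m^k$.

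For the converse, suppose $\aa = 0^{n-k}\times m^k$. Here the lock diagram of $\aa$ is the right-justified $k\times m$ rectangle sitting in rows $n-k+1,\dots,n$, which (up to the leading empty rows, which Kohnert moves can freely fill) is the \emph{same shape} as the key diagram of the weakly increasing composition $\aa$. More to the point, Theorem~\ref{thm:lock=key} applies in the reverse reading: the reverse composition $\mathrm{rev}(\aa)=m^k\times 0^{n-k}$ has weakly decreasing nonzero parts, so one is tempted to invoke it directly, but cleaner is to note that for a rectangular shape $m^k$ the lock Kohnert tableaux and key Kohnert tableaux of content $\aa$ literally coincide: condition~(4) of a lock Kohnert tableau (strict decrease down columns) and the inversion condition~(4) of a key Kohnert tableau become equivalent when every occupied column has the full set of labels forced, which is the case for a rectangle. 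Hence $\lock_\aa = \key_\aa$, and then Theorem~\ref{thm:key-sym} gives $\key_\aa = s_{\mathrm{rev}(\aa)}(x_1,\dots,x_n) = s_{m^k}(x_1,\dots,x_n)$, which is symmetric. This simultaneously yields the ``moreover'' statement.

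The main obstacle I expect is the converse direction's claim that $\LKT(0^{n-k}\times m^k) = \KKT(0^{n-k}\times m^k)$ — or equivalently that $\lock_\aa=\key_\aa$ in this rectangular case — needs to be pinned down carefully rather than hand-waved, because the defining conditions for the two tableau types differ (left-justified vs.\ right-justified base diagram; inversion condition vs.\ strict column decrease). I would handle it by checking that for content $0^{n-k}\times m^k$ every column that is nonempty in any Kohnert-reachable diagram must contain a specific down-closed set of labels, so that condition~(1) of each definition forces the same column contents and conditions~(3),(4) then force the same fillings; alternatively, one can sidestep this by citing Theorem~\ref{thm:lock=key} after a reflection/reindexing argument, but the rectangular symmetry is special enough that the direct check is short and self-contained. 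The forward direction is routine given the two lexicographic-order observations already established in the paper; the only care needed there is to make sure the swapped-exponent monomial I produce is genuinely absent, which again follows from the ``Kohnert moves strictly decrease lexicographic order'' principle applied to the already-sorted composition $\aa$.
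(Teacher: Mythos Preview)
Your converse direction is essentially the paper's argument (and simpler than you make it): for $\aa = 0^{n-k}\times m^k$ the nonzero parts are all equal to $m$, hence trivially weakly decreasing, so Theorem~\ref{thm:lock=key} applies directly to give $\lock_\aa=\key_\aa$, and then Theorem~\ref{thm:key-sym} finishes it. No reflection or reindexing is needed, and you do not need to argue that the two tableau sets literally coincide.

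The forward direction, however, has a genuine gap. You correctly reduce to $\aa$ weakly increasing via Proposition~\ref{prop:lock-qsym}. But your argument for ruling out the monomial $x^{s_j\aa}$ when $0<a_j<a_{j+1}$ does not work: Kohnert moves push boxes \emph{down}, so any reachable weight $\comp{b}$ satisfies $\sum_{r\geq i}b_r \leq \sum_{r\geq i}a_r$ for every $i$, and the extremal weight among all lock Kohnert diagrams of $\aa$ is $\aa$ itself in exactly this sense. Now compute: $s_j\aa$ has $\sum_{r\geq j+1}(s_j\aa)_r = a_j + a_{j+2}+\cdots+a_n < a_{j+1}+a_{j+2}+\cdots+a_n = \sum_{r\geq j+1}a_r$, and all other tail sums agree with those of $\aa$. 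So $s_j\aa$ lies on the \emph{permitted} side of every inequality, and the lexicographic obstruction you invoke does not apply. (Concretely, for $\aa=(0,1,2)$ the swapped weight $(0,2,1)$ is not excluded by any lex/dominance argument; it really is absent, but for a different reason.)

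The paper closes this gap with a direct counting argument using the LKT conditions: by condition~(2), all labels $i+2,\ldots,n$ are forced into rows $i+2,\ldots,n$; then the $a_i$ boxes in row $i+1$ must all carry label $i+1$; this leaves $a_{i+1}-a_i>0$ surplus $i+1$-labels that would have to sit in columns left of column $\max(\aa)-a_i+1$ and strictly below row $i+1$, contradicting condition~(3). You will need an argument of this flavor---the lexicographic shortcut is not available here.
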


\begin{proof}
By Proposition \ref{prop:lock-qsym}, $\aa$ must be weakly increasing or else $\lock_\aa$ is not quasisymmetric, and so not symmetric.

Suppose then that $\aa$ is weakly increasing and that there exists some index $i$ such that $a_{i+1} > a_i > 0$, and let $s_i\aa$ be $\aa$ with the parts $a_i$ and $a_{i+1}$ swapped. The lock polynomial of $\aa$ must contain a monomial $x^\aa$, so if it is symmetric, it must also contain the monomial $x^{s_i\aa}$. Consider a lock Kohnert tableau that would be associated with this monomial. 

By condition (2) in Definition \ref{def:LKT}, every box in rows $i+2$ to $n$ must have a label between $i+2$ and $n$, and since there are $a_{i+2} + \cdots + a_{n}$ many such boxes and labels, every such box must have such a label, and there are no remaining labels between $i+2$ and $n$ to place in lower rows.

Using condition (2) again, every one of the $a_i$ boxes in row $i+1$ must have an $i+1$ label, since no smaller labels can exist in row $i+1$, and from above, no larger labels can either. Since $a_{i+1} > a_i$, this leaves $a_{i+1}-a_i$ many $i+1$ labels that must go in lower rows. Since columns strictly decrease, these excess $i+1$ labels must be to the left of column $\max(\aa) - a_i +1$. However, this would imply the existence of $i+1$ labels strictly lower and to the left of the $i+1$ labels in row $i+1$, which contradicts condition (3). Therefore, no such lock Kohnert tableau can exist, and $\lock_\aa$ is not symmetric.

The only remaining cases are those for which $\aa = 0^{n-k} \times m^k$. By Theorem \ref{thm:lock=key}, we have $\lock_\aa = \key_\aa$, then by Theorem \ref{thm:key-sym}, we have $\key_\aa = s_{\mathrm{rev}(\aa)}(x_1,\ldots,x_n)$, so $\lock_\aa$ is always symmetric in these cases.
\end{proof}

%
\section{Crystals on Kohnert tableaux}
%
\label{sec:crystals}

Kashiwara \cite{Kas91} introduced the notion of crystal bases in his study of the representation theory of quantized universal enveloping algebras at $q=0$. He proved that normal crystals correspond to polynomial representations of the general linear group, with connected crystals corresponding to the irreducible representations. The characters of these connected normal crystals are the celebrated Schur functions. Kashiwara and Nakashima \cite{KN94} and Littelmann \cite{Lit95} gave an explicit combinatorial construction of normal crystals with explicit raising and lowering operators that act on semistandard Young tableaux.

Demazure \cite{Dem74b} introduced Demazure characters (which we refer to as key polynomials) that arose in connection with Schubert calculus \cite{Dem74a}. Demazure crystals (which we refer to as key crystals) are certain truncations of normal crystals conjectured by Littelmann \cite{Lit95} and proved by Kashiwara \cite{Kas93} to categorify Demazure characters. A subset $X$ of a crystal $\mathcal{B}$ has an induced structure taken from the crystal on $\mathcal{B}$ that we refer to as a \emph{subcrystal} of $\mathcal{B}$, and so a key crystal is a subcrystal of the normal crystal that it is a truncation of.

Combinatorially for the general linear group (type A), a \emph{crystal basis} is a set $\mathcal{B}$ not containing $0$, a \emph{weight map} $\wt:\mathcal{B} \rightarrow \mathbb{Z}^n$ from the basis to the weight lattice, and \emph{raising and lowering operators} $e_i,f_i: \mathcal{B} \rightarrow \mathcal{B} \cup \{0\}$, for $i = 1, 2, \ldots, n-1$ that satisfy certain axioms including $e_i(b) = b'$ if and only if $f_i(b') = b$. In particular, to define a crystal, it is enough to define the set $\mathcal{B}$, the weight map $\wt$, and the raising operators $e_i$, from which the lowering operators can be deduced.

Assaf and Schilling \cite{AS18a}(Definition 3.7) gave an explicit combinatorial construction of key crystals with raising and lowering operators that act on semistandard key tableaux, which can be translated into the language of Kohnert diagrams and tableaux, as presented in \cite{AG19} by Assaf and Gonz\'alez. 
In this paper, we focus specifically on these crystal operators on Kohnert diagrams and tableaux.

\begin{definition}[\cite{AG19}]\label{def:kohnert-pairing}
Given any diagram $D$ with $n\geq 1$ rows and $1 \leq i < n$, define the \emph{vertical $i$-pairing} of $D$ as follows: $i$-pair any boxes in rows $i$ and $i+1$ that are located in the same column and then interatively vertically $i$-pair any unpaired boxes in rows $i+1$ with the rightmost unpaired box in row $i$ located in a column to its left whenever all the boxes in rows $i$ and $i+1$ in the columns between them are already vertically $i$-paired.
\end{definition}

\begin{definition}[\cite{AG19}]
Given any integer $n \geq 0$ and any diagram $D$ with at most $n$ rows, for any integer $1 \leq i < n$, define the \emph{raising operator} $e_i$ on the space of diagrams as the operator that pushes the rightmost vertically unpaired box in row $i+1$ of $D$ down to row $i$. If $D$ has no vertically unpaired boxes in row $i+1$, then $e_i(D) = 0$. 
\end{definition}

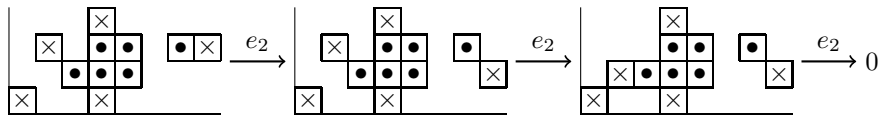
\begin{figure}[ht]
    \begin{center}
        \begin{tikzpicture}[xscale=3.8,yscale=1.6]
	\draw (-0.37,0.56)--(-0.37,1.45);
	\draw (0.63,0.56)--(0.63,1.45);
	\draw (1.63,0.56)--(1.63,1.45);
		\node at (0,1) (T01) {$\tableau{ 
        & & & \times \\
        & \times & & \bullet & \bullet & & \bullet & \times \\
        & & \bullet & \bullet & \bullet \\
        \times & & & \times\\ \hline
        }$};
		\node at (1,1) (T11) {$\tableau{ 
        & & & \times \\
        & \times & & \bullet & \bullet & & \bullet &  \\
        & & \bullet & \bullet & \bullet & & & \times \\
        \times & & & \times\\ \hline 
        }$};
		\node at (2,1) (T21) {$\tableau{ 
        & & & \times \\
        &  & & \bullet & \bullet & & \bullet &  \\
        & \times & \bullet & \bullet & \bullet & & & \times\\
        \times & & & \times\\ \hline
        }$};
        \node at (2.65,1) (Z) {$0$};
		\draw[thick,->] (T01) -- (T11) node[midway,above] {$e_2$};
		\draw[thick,->] (T11) -- (T21) node[midway,above] {$e_2$};
		\draw[thick,->] (T21) -- (Z) node[midway,above] {$e_2$};

	\end{tikzpicture}
    \end{center}
    \caption{The vertically $2$-paired boxes are highlighted with dots, while the remaining boxes in the third row of the leftmost diagram are unpaired.}
    \label{fig:vert-pair}
\end{figure}

Assaf and Gonz\'alez show in \cite{AG19}(Proposition 5.23) that these raising operators on key Kohnert diagrams coincide with their raising operators on key Kohnert tableaux. Therefore, we can simply take the definition of raising operators on key Kohnert tableaux as follows.

\begin{definition}
Given $T$ a key Kohnert tableau of content $\aa$ with underlying diagram $D$, if $e_i(D) \neq 0$, then the \emph{raising operator} $e_i$ on $T$ produces the unique key Kohnert tableau of content $\aa$ with underlying diagram $e_i(D)$. Otherwise, $e_i(T) = 0$.
\end{definition}

We note that there is a labeling algorithm \cite{AS18b} that constructs the unique key Kohnert tableau for a given key Kohnert diagram, as well as a more direct construction of crystal operators for keys \cite{AS18a}, but the specifics of these are not necessary for this paper.

We can repeat a similar process for raising operators on lock Kohnert tableaux. That is, given $T$ a lock Kohnert tableau of content $\aa$ with underlying diagram $D$, the raising operator on $T$ produces the unique lock Kohnert tableau of content $\aa$ with underlying diagram $e_i(D)$ if it exists, otherwise $e_i(T)=0$. Note that in this case, we also specify that the resulting diagram must have a valid lock Kohnert tableau labeling. This is because while the raising operator $e_i$ on a key Kohnert tableau always produces another key Kohnert tableau of the same content, the same may not be true for a given lock Kohnert tableau. Put another way, the minimal $k$ such that $e_i^{k+1}(T)= 0$ is the number of unpaired boxes in row $i+1$ for a key Kohnert tableau but may be smaller for a lock Kohnert tableau.

We also provide the following equivalent formulation for raising operators on lock Kohnert tableaux for completeness, where boxes are vertically paired based on the underlying diagram.

\begin{definition}
  Given a weak composition $\aa$, $T \in \LKT(\aa)$, and $1 \leq i < n$, the \emph{raising operator} $e_i$ acts on $T$ by $e_i(T) = 0$ if $T$ has no vertically unpaired boxes in row $i+1$ or if the rightmost unpaired box in row $i+1$ has the same label as a box to its right in the same row. Otherwise, $e_i$ pushes the rightmost vertically unpaired box in row $i+1$ of $T$ down to row $i$.
\end{definition}

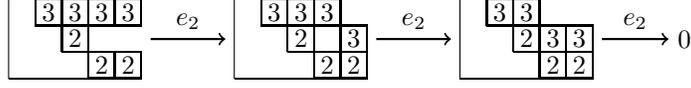
\begin{figure}
    \begin{center}
        \begin{tikzpicture}[xscale=3,yscale=1.6]
	\draw (-.295,0.67)--(-.295,1.335);
	\draw (1-.295,0.67)--(1-.295,1.335);
	\draw (2-.295,0.67)--(2-.295,1.335);
		\node at (0,1) (T01) {$\tableau{
		 & 3 & 3 & 3 & 3 \\
		 & & 2 \\
		 & & & 2 & 2 \\ \hline
		}$};
		\node at (1,1) (T11) {$\tableau{
		 & 3 & 3 & 3  \\
		 & & 2 & & 3\\
		 & & & 2 & 2 \\ \hline
		}$};
		\node at (2,1) (T21) {$\tableau{
		 & 3 & 3 &  \\
		 & & 2 & 3 & 3\\
		 & & & 2 & 2 \\ \hline
		}$};
        \node at (2.7,1) (Z) {$0$};
		\draw[thick,->] (T01) -- (T11) node[midway,above] {$e_2$};
		\draw[thick,->] (T11) -- (T21) node[midway,above] {$e_2$};
		\draw[thick,->] (T21) -- (Z) node[midway,above] {$e_2$};

	\end{tikzpicture}
    \end{center}
    \caption{Raising operators acting on a lock Kohnert tableau of content $(0,3,4)$. Notice that the third tableau is sent to zero despite there being an unpaired box in row 3.}
    \label{fig:LKT-raising}
\end{figure}

It is straightforward to see that this coincides with the previous definition on the underlying diagram. 
To avoid excessive notation, we will use $e_i$ for any raising operator on lock or key Kohnert diagrams or tableaux and $f_i$ for any lowering operator, where the type of object being acted on will either be clear from context or specified if not.

A \emph{crystal graph} is a visual depiction of a crystal, using the crystal basis as the vertices. These vertices are then connected by colored edges representing crystal operators. We will refer to the crystal graph on key Kohnert tableaux of content $\aa$ as the \emph{key crystal of $\aa$} and the crystal graph on lock Kohnert tableaux of content $\aa$ as the \emph{lock crystal of $\aa$} (see Figure \ref{fig:crystals} for example). It is well-known that the key crystal is connected, and it turns out that the same is true for the lock crystal.

\begin{figure}[ht]
    \begin{center}
        \begin{tikzpicture}[xscale = 2, yscale = 2]

        \node at (2,5) (A) {$\tableau{ & \\ 4 \\ 3 \\ 1 & 3 \\ \hline}$};
	\draw (2-0.178,4+0.65)--(2-0.178,4+1.35);
        \node at (1,4) (B) {$\tableau{ & \\ 4 \\ 3 & 3 \\ 1 & \\ \hline}$};
	\draw (1-0.178,3+0.65)--(1-0.178,3+1.35);
        \node at (3,4) (C) {$\tableau{ 4 & \\  \\ 3 \\ 1 & 3 \\ \hline}$};
	\draw (3-0.178,3+0.65)--(3-0.178,3+1.35);
        \node at (1,3) (D) {$\tableau{  & \\ 3 & 3 \\ 4 \\ 1  \\ \hline}$};
	\draw (1-0.178,2+0.65)--(1-0.178,2+1.35);
        \node at (2,3) (E) {$\tableau{ 4 & \\ & \\ 3 & 3 \\ 1  \\ \hline}$};
	\draw (2-0.178,2+0.65)--(2-0.178,2+1.35);
        \node at (3,3) (F) {$\tableau{ 4 & \\ 3 \\ \\ 1 & 3 \\ \hline}$};
	\draw (3-0.178,2+0.65)--(3-0.178,2+1.35);
        \node at (2,2) (G) {$\tableau{ 4 & \\ 3 & \\  & 3 \\ 1  \\ \hline}$};
	\draw (2-0.178,1+0.65)--(2-0.178,1+1.35);
        \node at (2,1) (H) {$\tableau{ 4 & \\ 3 & 3 \\ & \\ 1  \\ \hline}$};
	\draw (2-0.178,0.65)--(2-0.178,1.35);
        \node at (4.5, 4) (I) {$\tableau{ & \\ & 4 \\ 3 & 3 \\ & 1 \\ \hline}$};
	\draw (4.5-0.178,3+0.65)--(4.5-0.178,3+1.35);
        \node at (4.5, 3) (J) {$\tableau{ & \\ 3 & 4 \\ & 3 \\ & 1 \\ \hline}$};
	\draw (4.5-0.178,2+0.65)--(4.5-0.178,2+1.35);
        \node at (5.5, 3) (K) {$\tableau{ & 4 \\ & \\ 3 & 3 \\ & 1 \\ \hline}$};
	\draw (5.5-0.178,2+0.65)--(5.5-0.178,2+1.35);
        \node at (5.5, 2) (L) {$\tableau{ & 4 \\ 3& \\  & 3 \\ & 1 \\ \hline}$};
	\draw (5.5-0.178,1+0.65)--(5.5-0.178,1+1.35);
        \node at (5.5, 1) (M) {$\tableau{ & 4 \\ 3& 3 \\  &  \\ & 1 \\ \hline}$};
	\draw (5.5-0.178,0.65)--(5.5-0.178,1.35);
        \draw[thick,->,blue] (A) -- (B) node[midway,above] {$1$};
        \draw[thick,->,blue] (C) -- (E) node[midway,above] {$1$};
        \draw[thick,->,purple] (B) -- (D) node[midway,left] {$2$};
        \draw[thick,->,purple] (C) -- (F) node[midway,right] {$2$};
        \draw[thick,->,purple] (E) -- (G) node[midway,left] {$2$};
        \draw[thick,->,purple] (G) -- (H) node[midway,left] {$2$};
        \draw[thick,->,violet] (A) -- (C) node[midway,above] {$3$};
        \draw[thick,->,violet] (B) -- (E) node[midway,above] {$3$};
        \draw[thick,->,purple] (I) -- (J) node[midway,left] {$2$};
        \draw[thick,->,purple] (K) -- (L) node[midway,left] {$2$};
        \draw[thick,->,purple] (L) -- (M) node[midway,left] {$2$};
        \draw[thick,->,violet] (I) -- (K) node[midway,above] {$3$};
        \end{tikzpicture}
    \end{center}
    \caption{On the left is the key crystal of $\aa = (1, 0, 2, 1)$ and on the right is the lock crystal of $\aa$.}
    \label{fig:crystals}
\end{figure}
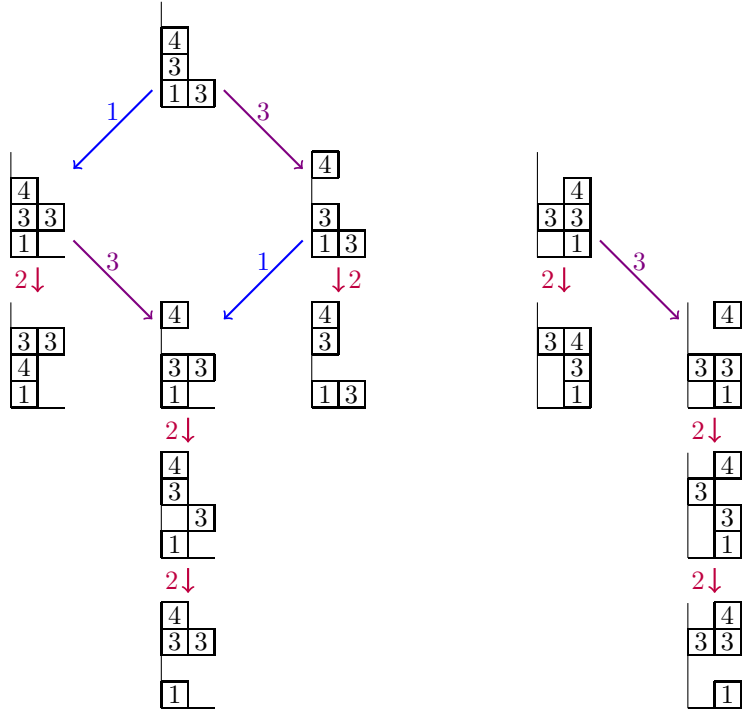

\begin{theorem}\label{thm:lock-connected}
For $\aa$ a weak composition, the raising and lowering operators on semistandard lock tableaux generate a connected, colored graph on $\LKT(\aa)$. 
\end{theorem}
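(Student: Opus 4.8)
The plan is to show connectedness by exhibiting, for an arbitrary $T \in \LKT(\aa)$, a sequence of raising operators $e_i$ carrying $T$ to the unique \emph{source} of the crystal, which I claim is $T_\aa$, the lock Kohnert tableau with weight $\flat(\aa)$ (the weakly-decreasing rearrangement of the nonzero parts, pushed up). Since each $e_i$ is invertible (via $f_i$) wherever it is nonzero, and every vertex reaches the common source, the graph on $\LKT(\aa)$ is connected. The first step is to identify the source: I would check that $T_\aa$ has no vertically unpaired box in any row $i+1$, i.e. $e_i(T_\aa) = 0$ for all $i$; this is where the right-justified shape of $\mathbb{D}(T_\aa)$ together with its weakly-decreasing row lengths makes all boxes column-paired from the top. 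Conversely, I would want to argue that $T_\aa$ is the \emph{only} tableau in $\LKT(\aa)$ killed by all $e_i$ — either directly, or as a consequence of the reach-the-source argument below.

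The main work is the following termination/progress argument. Given $T \neq T_\aa$, I need to find some $i$ with $e_i(T) \neq 0$ and a monovariant that strictly decreases under such a move, guaranteeing we eventually land at $T_\aa$. A natural monovariant is the weight $\wt(T)$ read in a suitable order: recall the observation already used in Section~\ref{sec:keys} that a Kohnert move strictly lowers the lexicographic order of the weight, so $f_i$ (which pushes a box \emph{down}) lowers it and $e_i$ (which pushes a box \emph{up}) raises it; thus repeatedly applying $e_i$'s moves the weight up toward $\flat(\aa)$, which is lexicographically maximal among weights of diagrams in the Kohnert closure. Concretely I would take, among all rows, the topmost row $r$ where $\mathbb{D}(T)$ disagrees with $\mathbb{D}(T_\aa)$ — necessarily $T$ has \emph{fewer} boxes in row $r$ and more below — pick the first deficient row and apply $e_{r-1}$ (or the appropriate $e_i$ bringing a box up into row $r$). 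The key points to verify are: (a) there really is a vertically unpaired box available to raise, which follows because the rows above row $r$ already agree with $T_\aa$ and are "full" relative to what sits below, so column-pairing cannot absorb everything; and (b) the raising operator does not get blocked by the label-equality obstruction in the lock raising rule, i.e. the rightmost unpaired box in row $i+1$ does not share its label with a box to its right — here I would use conditions (1)--(4) of Definition~\ref{def:LKT}, particularly that labels strictly decrease down columns and entries in row $i$ are $\geq i$, to show that the obstruction can only occur when $T$ is already "above" $T_\aa$ in the relevant rows, which it is not by choice of $r$.

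I expect step (b) — ruling out the lock-specific blocking of $e_i$ along the path to the source — to be the main obstacle, since this is precisely the phenomenon (illustrated in Figure~\ref{fig:LKT-raising}) that distinguishes the lock crystal from the genuinely normal key crystal, and it is why one cannot simply quote the key-crystal connectivity. If a direct argument proves delicate, the fallback is to proceed in two stages: first show every $T \in \LKT(\aa)$ is connected to \emph{some} tableau whose underlying diagram is a "standardly-shaped" intermediate, then handle that reduced family by induction on the number of rows or on $\sum_i i\cdot a_i$; alternatively one can invoke the forthcoming unlock/rectification machinery of the paper to transport the known connectedness of the key crystal back along the weight-preserving map, though for a self-contained proof of this theorem I would prefer the explicit monovariant argument above. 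Finally, I would note that connectedness plus invertibility of the $e_i$ immediately gives uniqueness of the source, closing the loop with the first step.
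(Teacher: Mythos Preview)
Your plan has a real gap at exactly the point you flag as the ``main obstacle'': you never establish that every $T \neq T_\aa$ admits some $i$ with $e_i(T) \neq 0$. The lock-specific blocking condition (the rightmost unpaired box in row $i+1$ sharing its label with a box to its right, as in Figure~\ref{fig:LKT-raising}) is not ruled out by a vague appeal to conditions (1)--(4) of Definition~\ref{def:LKT}; you would need a concrete argument that whenever the obstruction fires at level $i$, some other $e_j$ remains available, and you give none. Without this the monovariant argument stalls. Your fallback of importing connectedness from the key crystal via the unlock/rectification map is circular here: Corollary~\ref{cor:rect-nonzero}, which is what makes $R_\alpha$ total on $\LKT(\aa)$, is \emph{proved using} Theorem~\ref{thm:lock-connected}. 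There is also a directional confusion in your sketch: in this paper's conventions $e_i$ pushes a box \emph{down} from row $i+1$ to row $i$, so $T_\aa$ (weight $\flat(\aa)$, concentrated in the lowest rows) is the element annihilated by all $e_i$, and at the topmost row of disagreement $T$ has \emph{more} boxes than $T_\aa$, not fewer.

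The paper's proof avoids the uniqueness-of-source question entirely. It inducts on $|\aa|$: given $T$ with top nonempty row $m$, delete row $m$ to obtain a smaller lock Kohnert tableau $T'$, use the inductive hypothesis to connect $T'$ to $T_{\aa'}$ by operators $e_j,f_j$ with $j<m-1$ (these lift verbatim to $T$ since they ignore row $m$), and then perform further manoeuvres---crucially using \emph{both} raising and lowering operators and a second inductive call---to separate the largest-label boxes in row $m$ from the rest before pushing that string down. The appearance of $f_i$'s in the argument is essential and is precisely what lets the paper bypass the claim you left open.
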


\begin{proof}
See Figure \ref{fig:proof-example} for an explicit example of the argument below.
Recall that $T_\aa$ denotes the LKT of content $\aa$ with weight $\flat(\aa)$. We can check that this is unique by the definition of lock Kohnert tableaux. It is sufficient to show that for $T\in \LKT(\aa)$ with highest box in row $m$, $T$ is connected to $T_\aa$ using only the crystal operators $e_1, f_1, \ldots, e_{m-1}, f_{m-1}$. We prove this by inducting on the size of $\aa$. The base case consists of weak compositions of size $1$, where the single box in row $m$ is always connected to the single box in row $1$ by the sequence of crystal operators $e_{1}\circ  e_2 \circ \cdots \circ e_{m-1}$. 

Assume now that for any weak composition $\comp{b}$ of size $n-1$ or smaller, if $S\in \LKT(\comp{b})$ with highest box in row $i$, we can connect $S$ to $T_\comp{b}$ using only the operators $e_1,f_1,\ldots,e_{i-1},f_{i-1}$. Fix $\comp{a}$ to be a weak composition of size $n$ with nonzero parts $\{a_{j_1},\ldots, a_{j_k}\}$, and let $T\in \LKT(\aa)$ with highest box in row $m$.

Let $T'$ be the $\LKT$ obtained by removing all boxes of $T$ in row $m$, and let $T'$ have shape $\aa '$ and highest box in row $m' < m$. By the inductive assumption, there is some sequence of crystal operators $e_1,f_1,\ldots, e_{m'-1}, f_{m'-1}$ that sends $T'$ to $T_{\aa '}$. Since these crystal operators only check the positions of boxes in row $m'$ and below, applying the same sequence of crystal operators to $T$ gives a tableau $U$ which boxes in row $m$ everywhere that $T$ does and which has boxes below row $m$ everywhere that $T_{\aa '}$ does. Suppose that $U$ has some number $t$ of boxes with label $j_k$ in row $k$. There are no boxes in the rows strictly between $k$ and $m$ and every box with label $j_k$ in row $k$ must be strictly right of every box in row $m$, so applying $f^t_{m-1} \circ\ \cdots \circ f^t_k$ to $U$ brings all $t$ of the boxes that were in row $k$ with label $j_k$ to row $m$. Therefore, we can assume every box of $U$ with label $j_k$ must be in row $m$.

If the only boxes in row $m$ have label $j_k$, then set $W=U$. Otherwise, $U$ has some boxes in row $m$ with label smaller than $j_k$, so let $c$ be the rightmost column containing such a box and let that box have label $\ell$. Obtain $U'$ from $U$ by removing all boxes in row $m$, then obtain $V'$ from $U'$ by pushing the highest box of each column to the right of $c$ up to row $m-1$ while preserving their label. This clearly still satisfies the column strict condition on LKT, and the sets of labels in each column are unchanged so condition (1) holds as well.

Suppose that condition (3) of Definition \ref{def:LKT} is not satisfied in $V'$ because of some pair of boxes $x$ left of $y$ with label $p \neq \ell$, where $y$ is pushed above $x$. By construction, $x$ must be weakly left of column $c$ and $y$ must be strictly right. 
In $U$, column $c$ contains a box in row $m$ with label $\ell$ and no boxes above row $m$. Then the column strict condition implies that there cannot be any labels larger than $\ell$ in column $c$, and then condition (1) implies there cannot be labels to the right of $c$ with label larger than $\ell$ either. It also implies that the highest box in each column to the right of $c$ in $U'$ must have label at least $\ell$. Therefore, if $p >\ell$, then $x$ cannot exist, and if $p < \ell$, then $y$ is not the highest box in its column is is therefore not pushed upwards.

Since $U$ is an $\LKT$ with a label $\ell$ in row $m$, we have $\ell \geq m$. Then using the observation that the highest box in each column to the right of $c$ in $U'$ has label at least $\ell$, every box that is pushed up to row $m-1$ in $V'$ has label at least $m-1$. Since all conditions are satisfied, $V'$ is an $\LKT$ by definition. Then by the inductive assumption, some sequence of the operators $e_1, f_1,\ldots, e_{m-2}, f_{m-2}$ sends $U'$ to $V'$, and therefore the same sequence of operators on $U$ gives a tableau $V$ which has boxes in row $m$ everywhere that $U$ does and which has boxes below row $m$ everywhere that $V'$ does.

By construction, all boxes in row $m$ with label $j_k$ of $V$ must be paired and all other boxes of row $m$, which have label smaller than $j_k$, are unpaired. We can then apply $e_{m-1}$ operators until all the unpaired boxes of row $m$ are in row $m-1$ and call the new tableau $W$.

In either case, the tableau $W$ has every box with label $j_k$ in row $m$ and every box with label smaller than $j_k$ below row $m$. Obtain $W'$ a LKT of content $\aa''$ from $W$ by removing all boxes in row $m$. By the inductive assumption, some sequence of crystal operators $e_1,f_1,\ldots, e_{m-2},f_{m-2}$ sends $W'$ to $T_{\comp{a}''}$. Then applying the same sequence of operators to $W$ followed by applying $e^{a_{j_k}}_k \circ\cdots\circ e^{a_{j_k}}_{m-1}$ sends $W$ to $T_\comp{a}$ and we are done.

\end{proof}

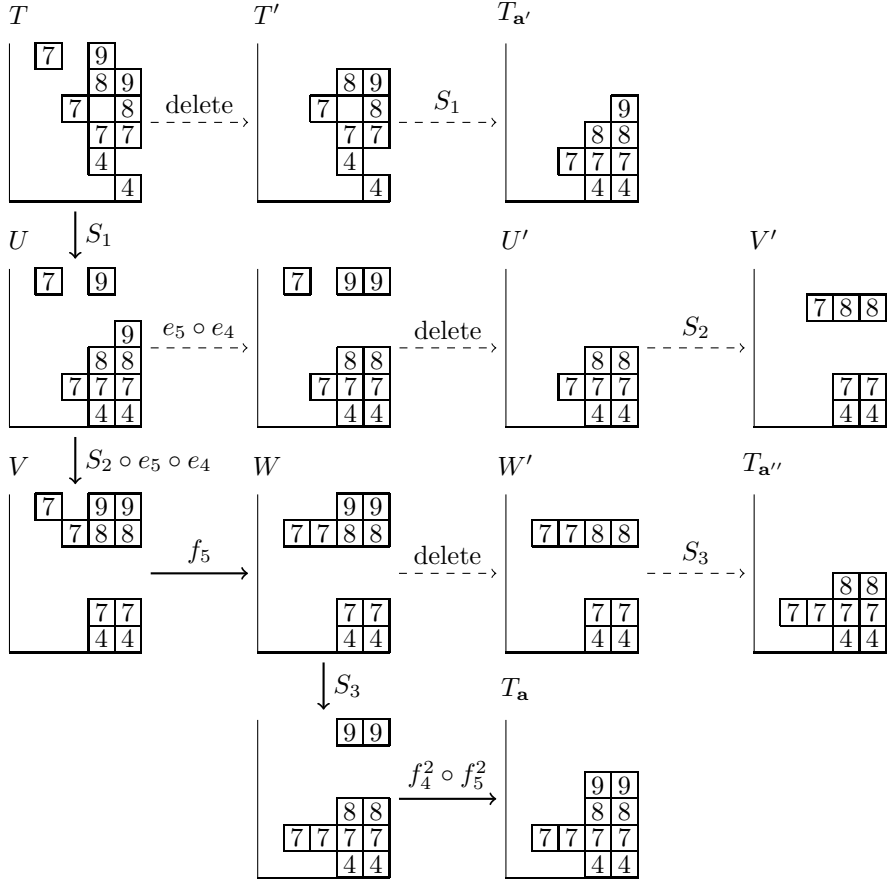
\begin{figure}[ht]
    \begin{center}
        \begin{tikzpicture}[xscale = 3.3, yscale = 3]
            \node at (1,4) (A) [label = {[xshift = -0.75cm] $T$}]{$\tableau{& 7 & & 9 & \\ & & & 8 & 9 \\ & & 7 & & 8 \\ & & & 7 & 7 \\ & & & 4 & \\ & & & & 4 \\ \hline}$};
	\draw (1-0.265,3+0.65)--(1-0.265,3+1.35);
            \node at (2,4) (B) [label = {[xshift = -0.75cm] $T'$}]{$\tableau{&  & &  & \\ & & & 8 & 9 \\ & & 7 & & 8 \\ & & & 7 & 7 \\ & & & 4 & \\ & & & & 4 \\ \hline}$};
	\draw (2-0.265,3+0.65)--(2-0.265,3+1.35);
            \node at (3,4) (C) [label = {[xshift = -0.75cm] $T_{\aa'}$}]{$\tableau{& \\ & \\ & & & & 9 \\ & & & 8 & 8\\ & & 7 & 7 & 7 \\ & & & 4 & 4\\ \hline}$};
	\draw (3-0.265,3+0.65)--(3-0.265,3+1.35);
            \node at (1,3) (D) [label = {[xshift = -0.75cm] $U$}] {$\tableau{& 7 & & 9 & \\ & \\ & & & & 9 \\ & & & 8 & 8\\ & & 7 & 7 & 7 \\ & & & 4 & 4\\ \hline}$};
	\draw (1-0.265,2+0.65)--(1-0.265,2+1.35);
            \node at (2,3) (E) {$\vline\tableau{& 7 & & 9 & 9 \\ & \\ & & & & \\ & & & 8 & 8\\ & & 7 & 7 & 7 \\ & & & 4 & 4\\ \hline}$};
	\draw (2-0.265,2+0.65)--(2-0.265,2+1.35);
            \node at (3,3) (F) [label = {[xshift = -0.75cm] $U'$}]{$\tableau{& & & & \\ & \\ & & & & \\ & & & 8 & 8\\ & & 7 & 7 & 7 \\ & & & 4 & 4\\ \hline}$};
	\draw (3-0.265,2+0.65)--(3-0.265,2+1.35);
            \node at (4,3) (G) [label = {[xshift = -0.75cm] $V'$}]{$\tableau{& & & & \\ & & 7 & 8 & 8 \\ & & & & \\ & & &  & \\ & &  & 7 & 7 \\ & & & 4 & 4\\ \hline}$};
	\draw (4-0.265,2+0.65)--(4-0.265,2+1.35);
            \node at (1,2) (H) [label = {[xshift = -0.75cm] $V$}]{$\tableau{& 7 & & 9& 9\\ & & 7 & 8 & 8 \\ & & & & \\ & & &  & \\ & &  & 7 & 7 \\ & & & 4 & 4\\ \hline}$};
	\draw (1-0.265,1+0.65)--(1-0.265,1+1.35);
            \node at (2,2) (I) [label = {[xshift = -0.75cm] $W$}]{$\tableau{&  & & 9& 9\\ & 7 & 7 & 8 & 8 \\ & & & & \\ & & &  & \\ & &  & 7 & 7 \\ & & & 4 & 4\\ \hline}$};
	\draw (2-0.265,1+0.65)--(2-0.265,1+1.35);
            \node at (3,2) (J) [label = {[xshift = -0.75cm] $W'$}]{$\tableau{&  & & & \\ & 7 & 7 & 8 & 8 \\ & & & & \\ & & &  & \\ & &  & 7 & 7 \\ & & & 4 & 4\\ \hline}$};
	\draw (3-0.265,1+0.65)--(3-0.265,1+1.35);
            \node at (4,2) (K) [label = {[xshift = -0.75cm] $T_{\aa''}$}]{$\tableau{&  & & & \\ &  &  &  &  \\ & & & & \\ & & & 8 & 8\\ & 7&7  & 7 & 7 \\ & & & 4 & 4\\ \hline}$};
	\draw (4-0.265,1+0.65)--(4-0.265,1+1.35);
            \node at (2,1) (L) {$\tableau{&  & &9 &9 \\ &  &  &  &  \\ & & & & \\ & & & 8 & 8\\ & 7&7  & 7 & 7 \\ & & & 4 & 4\\ \hline}$};
	\draw (2-0.265,0.65)--(2-0.265,1.35);
            \node at (3,1) (M) [label = {[xshift = -0.75cm] $T_\aa$}]{$\tableau{&  & & & \\ &  &  &  &  \\ & & & 9 & 9 \\ & & & 8 & 8\\ & 7&7  & 7 & 7 \\ & & & 4 & 4\\ \hline}$};
	\draw (3-0.265,0.65)--(3-0.265,1.35);
            \draw[dashed,->] (A) -- (B) node[midway,above] {delete};
            \draw[dashed,->] (B) -- (C) node[midway,above] {$S_1$};
            \draw[thick,->] (A) -- (D) node[midway,right] {$S_1$};
            \draw[dashed,->] (D) -- (E) node[midway,above] {$e_5\circ e_4$};
            \draw[dashed,->] (E) -- (F) node[midway,above] {delete};
            \draw[dashed,->] (F) -- (G) node[midway,above] {$S_2$};
            \draw[thick,->] (D) -- (H) node[midway,right] {$S_2\circ e_5 \circ e_4$};
            \draw[thick,->] (H) -- (I) node[midway,above] {$f_5$};
            \draw[dashed,->] (I) -- (J) node[midway,above] {delete};
            \draw[dashed,->] (J) -- (K) node[midway,above] {$S_3$};
            \draw[thick,->] (I) -- (L) node[midway,right] {$S_3$};
            \draw[thick,->] (L) -- (M) node[midway,above] {$f_4^2 \circ f_5^2$};
        \end{tikzpicture}
    \end{center}
    \caption{An explicit example of the inductive argument in the proof of Theorem \ref{thm:lock-connected} with diagrams labeled. Each $S_i$ is a sequence of operators given by the inductive assumption, and the full sequence applied to $T$ to get to $T_\aa$ is given by following the southwest border.}
    \label{fig:proof-example}
\end{figure}

We will see in the next section that in addition to being connected, the lock crystal of $\aa$ forms a subcrystal of the key crystal of $\aa$.

%
\section{Rectification and Unlock}
%
\label{sec:rect-unlock}

The aim of this section is to prove the following theorem.

\begin{theorem} \label{thm:dem-subcrystal}
Let $\comp{a}$ be a weak composition. Then there is an injective, weight-preserving map $U_{\flat(\comp{a})}$ from $\LKT(\comp{a})$ to $\KKT(\comp{a})$ such that for any $T \in \LKT(\comp{a})$, if $e_i(T) \neq 0$, then $U_{\flat(\comp{a})}\circ e_i (T) = e_i \circ U_{\flat(\comp{a})}(T)$, and if $f_i(T) \neq 0$, then $U_{\flat(\comp{a})}\circ f_i (T) = f_i \circ U_{\flat(\comp{a})}(T)$. That is, the injection intertwines crystal operators on Kohnert tableaux.
\end{theorem}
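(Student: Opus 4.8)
The plan is to realize $U_{\flat(\comp{a})}$ as a composition of \emph{unlock operators}, which are labeled analogues of the rectification operators of Assaf and Gonz\'alez, and then to transfer the good behavior of rectification on unlabeled diagrams to unlock on labeled ones. So the first step is to set up the rectification operators on unlabeled diagrams and record the properties I will use: that a suitable composition of them, indexed by $\flat(\comp{a})$, restricts to a weight-preserving injection from the lock Kohnert diagrams of $\comp{a}$ into the key Kohnert diagrams of $\comp{a}$; that this composition is invertible on its domain (via the reverse ``unrectification'' moves); and that each rectification operator commutes with the raising operator $e_i$ at the level of unlabeled diagrams, carrying a diagram with an unpaired box in row $i+1$ to another such diagram. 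These properties are either recalled from \cite{AG19} or verified directly.

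The second step is to introduce the unlock operators directly on labeled diagrams: an unlock operator performs the relevant rectification move on $\mathbb{D}(T)$ and drags every label along with its box, and I would verify that on underlying diagrams this coincides with the corresponding rectification operator. The core of the theorem is then the assertion that the full composition $U_{\flat(\comp{a})}$, applied to any $T\in\LKT(\comp{a})$, outputs a genuine key Kohnert tableau of content $\comp{a}$ --- equivalently, that the labels dragged along by the algorithm form exactly the unique key Kohnert labeling of the rectified diagram. I expect this to be the main obstacle. It requires tracking the four defining conditions of a key Kohnert tableau through every unlock move and showing that, once the algorithm terminates, the flagged condition and the inversion condition~(4) --- precisely the two conditions at which the key and lock definitions genuinely diverge --- hold, even though they may fail at intermediate stages. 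This is exactly the payoff of carrying labels rather than bare diagrams: the label-dragging recipe must be shown to reconstruct the correct key labeling, not merely some valid diagram, and that is a careful (if essentially bookkeeping) verification.

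The third step collects the three asserted properties. Weight-preservation and injectivity of $U_{\flat(\comp{a})}$ are inherited from the underlying rectification composition, together with the fact that each of $\LKT(\comp{a})$ and $\KKT(\comp{a})$ is in bijection with its set of underlying diagrams. For the crystal intertwining, I would use that $e_i$ and $f_i$ on both $\LKT(\comp{a})$ and $\KKT(\comp{a})$ are governed entirely by the action of $e_i$, $f_i$ on the underlying diagram followed by the unique relabeling. Hence, if $e_i(T)\neq 0$ for $T\in\LKT(\comp{a})$, then $\mathbb{D}(T)$ has an unpaired box in row $i+1$ and $e_i(\mathbb{D}(T))$ is again a lock Kohnert diagram of $\comp{a}$; commuting $e_i$ past the rectification composition gives $\mathbb{D}\bigl(U_{\flat(\comp{a})}(e_i(T))\bigr)=e_i\bigl(\mathbb{D}(U_{\flat(\comp{a})}(T))\bigr)$, which is a key Kohnert diagram of $\comp{a}$ and so nonzero, and uniqueness of the key labeling forces $U_{\flat(\comp{a})}(e_i(T))=e_i(U_{\flat(\comp{a})}(T))$. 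The statement for $f_i$ follows since $f_i$ is the partial inverse of $e_i$ on each of the two crystals.
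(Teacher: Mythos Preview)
Your overall architecture matches the paper's: define a composition of operators mirroring rectification, show it lands in $\KKT(\comp{a})$, and then inherit injectivity, weight-preservation, and the intertwining from the Assaf--Gonz\'alez commutation of $\ee_c$ with $e_r$ on unlabeled diagrams. Step~3 of your proposal is essentially how the paper finishes the argument.

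The gap is in your Step~2. Your unlock operator ``performs the relevant rectification move on $\mathbb{D}(T)$ and drags every label along with its box,'' i.e.\ the box that $\ee_i$ moves keeps its original label. This naive label-dragging does \emph{not} produce the key Kohnert labeling, and the claim that ``the labels dragged along by the algorithm form exactly the unique key Kohnert labeling of the rectified diagram'' is false. Already in the paper's running example with $\comp{a}=(1,0,3,0,3,2)$, the third step $\ee_1$ pushes the box in position (row~$4$, column~$2$), which at that moment carries label~$5$; dragging that label would put two $5$'s in column~$1$ and leave the $6$'s in columns~$2,3$, violating condition~(1) of a key Kohnert tableau. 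No amount of further dragging repairs this, since each rectification step moves exactly one box and cannot redistribute labels within a column. The paper's unlock operator $u_i$ is genuinely more elaborate: it selects the box $x$ of \emph{minimal non-left-justified label} in column $i+1$, and before pushing it left, iteratively swaps $x$ downward within column $i+1$ past any string it ``crosses,'' so that the label ultimately landing in column $i$ is the correct one. Proving that this swap-then-push procedure (a)~is well defined at every step, (b)~agrees with $\ee_i$ on the underlying diagram, and (c)~preserves the flagged and inversion conditions is the real content of Lemmas~\ref{lemma:key-conditions} and~\ref{lemma:unlock-well-defined}, and this is where the work lies.

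A related issue: in Step~1 you list among the properties to be ``recalled from \cite{AG19} or verified directly'' that $R_{\flat(\comp{a})}$ carries lock Kohnert diagrams of $\comp{a}$ to key Kohnert diagrams of $\comp{a}$. This is not in \cite{AG19}; it is precisely what the labeled unlock machinery is built to establish. Without a correct labeled operator (or some independent diagram-level argument), you have no proof that the image of $R_{\flat(\comp{a})}$ lies in the key Kohnert diagrams, and hence no target for the unique-relabeling step you invoke later.
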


\begin{corollary}
Given a weak composition $\comp{a}$, the lock crystal of $\comp{a}$ is a subcrystal of the key crystal of $\comp{a}$.
\end{corollary}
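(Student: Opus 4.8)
The plan is to exhibit the desired subcrystal as the image of the map of Theorem~\ref{thm:dem-subcrystal}. First, write $U := U_{\flat(\comp{a})}$ and $X := U(\LKT(\comp{a})) \subseteq \KKT(\comp{a})$. Since $U$ is injective it is a bijection from $\LKT(\comp{a})$ onto $X$, and it is weight-preserving. Recall that the induced structure on a subset $X$ of the key crystal is the standard one: for $S \in X$, one applies $e_i$ (resp.\ $f_i$) as in the key crystal and keeps the result when it again lies in $X$, returning $0$ otherwise. So it suffices to prove that the bijection $U$ intertwines the lock-crystal operators on $\LKT(\comp{a})$ with these induced operators on $X$; granting that, $U$ is an isomorphism of crystals, and the lock crystal of $\comp{a}$ is, by definition, a subcrystal of the key crystal of $\comp{a}$.

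Next, fix $T \in \LKT(\comp{a})$ and $i$. If $e_i(T) \neq 0$ in the lock crystal, then Theorem~\ref{thm:dem-subcrystal} gives $e_i(U(T)) = U(e_i(T)) \in X$, so the induced operator on $X$ sends $U(T)$ to $U(e_i(T))$, as required, and the case $f_i(T) \neq 0$ is identical. In particular $U$ already embeds the lock crystal as a labeled subgraph of the key crystal; the extra content of the corollary is that it lands inside the \emph{induced} subcrystal on $X$, which forces us to rule out the following situation: $e_i(T) = 0$ in the lock crystal while $e_i(U(T))$, computed in the key crystal, is a nonzero element of $X$. Suppose $e_i(U(T)) = U(T')$ with $T' \in \LKT(\comp{a})$; then $f_i(U(T')) = U(T) \neq 0$ in the key crystal. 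If we knew $f_i(T') \neq 0$ in the lock crystal, the $f_i$-intertwining of Theorem~\ref{thm:dem-subcrystal} would give $U(f_i(T')) = f_i(U(T')) = U(T)$, hence $f_i(T') = T$ by injectivity, hence $e_i(T) = T' \neq 0$ since on lock Kohnert tableaux $f_i$ is the partial inverse of $e_i$ --- contradicting $e_i(T) = 0$. The symmetric statement, that $f_i(T) = 0$ in the lock crystal forces $f_i(U(T))$ to be $0$ or to leave $X$, then follows by applying this to an appropriate tableau. So the entire corollary comes down to one claim: $U$ sends the bottom of every $i$-string of the lock crystal to the bottom of an $i$-string of the key crystal; equivalently $f_i(T) = 0$ implies $f_i(U(T)) = 0$, which together with the $f_i$-intertwining yields $\varphi_i(U(T)) = \varphi_i(T)$ for every $T$ and $i$.

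This claim is the main obstacle, and it is not formal: a lock $i$-string can map onto a \emph{proper} segment of a key $i$-string --- the image $X$ need not be closed under the key raising operators, as one already sees for $\comp{a} = (1,0,2,1)$ in Figure~\ref{fig:crystals} --- so one must verify that the part of the key $i$-string omitted from the image always lies \emph{above} the image, never below it. I would prove this from the explicit construction of $U_{\flat(\comp{a})}$ produced in the proof of Theorem~\ref{thm:dem-subcrystal}. Because the unlock operators act on the underlying diagram exactly as the Assaf--Gonz\'alez rectification operators while carrying the labels along, one can follow, step by step through the algorithm, the box in row $i$ that the lowering operator $f_i$ would raise, and compare the obstruction to $f_i(T) = 0$ with the obstruction to $f_i\bigl(U_{\flat(\comp{a})}(T)\bigr) = 0$, checking that the first forces the second. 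Once this is established, the argument of the previous paragraph applies to all $T$ and $i$, so $U$ is a weight-preserving bijection that intertwines the crystal operators with the induced ones on $X$; hence $U$ is an isomorphism of crystals and the lock crystal of $\comp{a}$ is a subcrystal of the key crystal of $\comp{a}$.
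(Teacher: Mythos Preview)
The paper states this corollary immediately after Theorem~\ref{thm:dem-subcrystal} with no proof at all; evidently the authors regard it as formal once the injective, weight-preserving, operator-intertwining map $U_{\flat(\aa)}$ is in hand. Under the reading that ``subcrystal'' simply means the edge-colored, weighted graph of the lock crystal embeds into that of the key crystal, this \emph{is} immediate: Theorem~\ref{thm:dem-subcrystal} says every lock edge maps to a key edge of the same color between vertices of the same weight.

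You are reading ``subcrystal'' more strictly, via the paper's own definition (a subset with the induced structure), and you correctly observe that this demands more: one must rule out the possibility that $e_i(T)=0$ in the lock crystal while $e_i(U(T))$ lands back in the image $X$. Your reduction of this to the claim that $f_i(T)=0$ in the lock crystal forces $f_i(U(T))=0$ in the key crystal is clean and correct. In this sense you have gone further than the paper, which does not engage with the issue.

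However, your proposal has a genuine gap at exactly this point. You do not prove the claim; you write that you ``would prove this'' by tracking the relevant box through the unlock algorithm and comparing obstructions, but no argument is given. This is not a formality: the obstruction to $f_i(T)=0$ in the lock crystal can come either from the diagram level (no unpaired box in row $i$) or from the labeling constraint peculiar to lock tableaux, and you would need to handle both. The diagram-level case does follow from Theorem~\ref{thm:rect-commute}, since rectification preserves the number of vertically unpaired boxes in each row; but the labeling case, where $f_i(\mathbb{D}(T))\neq 0$ yet there is no lock tableau on that diagram mapping back to $T$ under $e_i$, is not addressed. Until that is handled your argument is a plan, not a proof, and the paper itself offers nothing to fill the hole.
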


We will show this by comparing the rectification operators of Assaf and Gonz\'alez \cite{AG19} that act on Kohnert diagrams and new operators which we will call \emph{unlock operators} that act on labeled Kohnert diagrams. We will see that unlock operators on lock Kohnert tableaux act on the underlying diagram in the same way as rectification operators with the added benefit that unlock operators can track the movement of labels through each step. Once Theorem \ref{thm:dem-subcrystal} is proven, our desired result on the difference of a key and lock polynomial follows immediately.
We begin by defining rectification operators.

\begin{definition}[\cite{AG19}]
Given any diagram $D$ with $n\geq 1$ columns and integer $1 \leq i < n$, define the \emph{horizontal $i$-pairing} of $D$ as follows: $i$-pair any boxes in columns $i$ and $i+1$ that are located in the same row and then interatively $i$-pair any unpaired box in column $i+1$ with the lowest unpaired box in column $i$ located in a row above it whenever all the boxes in columns $i$ and $i+1$ in the rows between them are already horizontally $i$-paired.
\end{definition}

\begin{definition}[\cite{AG19}]
Given any integer $n\geq 0$ and any diagram $D$ with at most $n$ columns, for any integer $1\leq i < n$, define the \emph{rectification operator} $\ee_i$ on the space of diagrams as the operator which pushes the bottom-most horizontally unpaired box in column $i+1$ of $D$ left to column $i$. If $D$ has no unpaired boxes in column $i+1$, then $\ee_i(D) = 0$.
\end{definition}

As Assaf and Gonz\'alez note, these operators can be viewed as a rotation of raising operators on Kohnert diagrams. We also have the following equivalent formulation that originates from the characterization of key Kohnert diagrams in \cite{AS18b}(Lemma 2.2), which we find more convenient to work with in the proofs to follow. Given a diagram $D$ and an integer $i\geq 1$, define

\begin{equation}
    M^i(D,r) = \# \{(s,i+1)\in D | s \geq r \} - \# \{(s,i)\in D | s\geq r\},
\end{equation}
\begin{equation}
    M^i(D) = \underset{r}{\max}(M^i(D,r)).
\end{equation}

\begin{definition}
Given a positive integer $i \geq 1$, define the \emph{rectification operators} $\ee_i$ on Kohnert diagrams as follows. If $M^i(D) \leq 0$, then set $\ee_i(D)=0$; otherwise, letting $r$ be the largest row index such that $M^i(D,r) = M^i(D)$, set $\ee_i(D)$ to be the result of pushing the cell in position $(r,i+1)$ left to position $(r,i)$.
\end{definition}

We can see that this is equivalent because the largest row index on which $M^i(D,r)$ achieves its maximum is the same row as the lowest row containing a horizontally unpaired box in column $i+1$.
Now for $\comp{a}$ a weak composition, $m=\mathrm{max}(a_i)$, and $\alpha = \flat(\comp{a})$, let $R_{\alpha,i}$ denote the composition of rectification moves
\begin{equation}
   R_{\alpha,i} = (\ee_{\alpha_i}\circ \cdots \circ \ee_{m-1})\circ \cdots \circ (\ee_2\circ \cdots \circ \ee_{m-\alpha_i+1})\circ (\ee_{1}\circ \cdots \circ \ee_{m-\alpha_i}),
\end{equation}
and let $R_{\alpha}$ denote the composition of rectification moves
\begin{equation}
 R_\alpha =   R_{\alpha,\ell(\alpha)} \circ \cdots \circ R_{\alpha,2}\circ R_{\alpha,1}.
\end{equation}

We will sometimes refer to $R_\alpha$ as the \emph{Rectification algorithm} (for $\comp{a}$) and to each individual rectification operator that $R_\alpha$ is composed of as the \emph{steps} of the algorithm. We note that the order of rectification operators applied here is different in general from the order used by Assaf and Gonz\'alez.

\begin{figure}[ht]
	\begin{center}
	\begin{tikzpicture}[xscale=2.2,yscale=1.6]
		\node at (0,1) (T01) {$\tableau{ \times & \bullet & \bullet \\ & \bullet \\ \times & \bullet & \bullet \\ & & \bullet \\ & & \times\\ \hline}$};
	\draw (-0.24,0.45)--(-0.24,1.55);
		\node at (1,1) (T11) {$ \tableau{ \bullet & \bullet & \times \\ & \times \\ \bullet & \bullet & \times \\ & & \times \\ & \times\\ \hline}$};
	\draw (1-0.24,0.45)--(1-0.24,1.55);
		\node at (2,1) (T21) {$\tableau{ \bullet & \bullet & \times \\ & \times \\ \bullet & \bullet & \times \\ & & \times \\  \times\\ \hline}$};
	\draw (2-0.24,0.45)--(2-0.24,1.55);
		\node at (3,1) (T31) {$ \tableau{ \times & \bullet & \bullet \\  \times \\ \times & \bullet & \bullet \\ & & \times \\  \times\\ \hline}$};
	\draw (3-0.24,0.45)--(3-0.24,1.55);
		\node at (4,1) (T41) {$\tableau{ \times & \times & \times \\  \times \\ \times & \times & \times \\ & \times \\  \times\\ \hline}$};
	\draw (4-0.24,0.45)--(4-0.24,1.55);
		\draw[thick,->] (T01) -- (T11) node[midway,above] {$\ee_2$};
		\draw[thick,->] (T11) -- (T21) node[midway,above] {$\ee_1$};
		\draw[thick,->] (T21) -- (T31) node[midway,above] {$\ee_1$};
		\draw[thick,->] (T31) -- (T41) node[midway,above] {$\ee_2$};
		
	\end{tikzpicture}
	\caption{\label{fig:rect-103032} For $\aa = (1,0,3,0,3,2)$, we have $\alpha = (1,3,3,2)$ and $R_\alpha = \protect\ee_2 \protect\ee_1 \protect\ee_1 \protect\ee_2$. On the left is a lock Kohnert diagram of $\aa$ and each step of the Rectification algorithm for $\aa$ on that diagram with relevant horizontally paired boxes represented by bullets. }
	\end{center}

\end{figure}
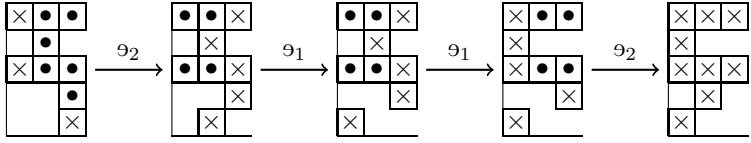

We have the following crucial properties of rectification operators which we will leverage in our proof of Theorem \ref{thm:dem-subcrystal}.

\begin{proposition}\label{prop:rect-weight-inject}
Given a weak composition $\aa$ and a Kohnert diagram $D$, if $R_\alpha(D) \neq 0$, then $R_\alpha$ is weight preserving and injective.
\end{proposition}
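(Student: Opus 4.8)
The plan is to handle the two assertions separately: weight-preservation is immediate, and injectivity is where the work lies. For weight-preservation, recall that each step of $R_\alpha$ is a rectification operator $\ee_i$, which by definition moves a single cell from a position $(r,i+1)$ to $(r,i)$. First I would note that $(r,i)$ must be empty before the move: if it were occupied, the cells $(r,i)$ and $(r,i+1)$ would be horizontally $i$-paired at the very first stage of the horizontal $i$-pairing, contradicting that the moved cell is horizontally unpaired. Hence every step is a genuine relocation of a cell \emph{within its own row}, so it leaves the number of cells in each row unchanged. Since $R_\alpha(D)\neq 0$ means no step along the way returns $0$, composing the steps gives $\wt(R_\alpha(D))=\wt(D)$.

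For injectivity I would first reduce to a single rectification operator. Write $R_\alpha=\ee_{i_N}\circ\cdots\circ\ee_{i_1}$ and suppose $R_\alpha(D)=R_\alpha(\tilde D)\neq 0$. Then all the intermediate diagrams $D_k=\ee_{i_k}\circ\cdots\circ\ee_{i_1}(D)$ and $\tilde D_k$ are nonzero, and in particular $D_{N-1}$ and $\tilde D_{N-1}$ both lie in the set on which $\ee_{i_N}$ does not vanish. If one knows that each $\ee_i$ is injective on $\{D':\ee_i(D')\neq 0\}$, then $\ee_{i_N}(D_{N-1})=\ee_{i_N}(\tilde D_{N-1})$ forces $D_{N-1}=\tilde D_{N-1}$, and iterating $N$ times yields $D=\tilde D$. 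So everything comes down to injectivity of a single $\ee_i$ on the diagrams it does not kill.

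This last point is the heart of the matter and the step I expect to be the main obstacle; I would establish it by exhibiting an explicit partial inverse. The natural candidate is the "un-rectification" operator that pushes the \emph{top}-most horizontally unpaired cell of column $i$ to the right into column $i+1$ (returning $0$ if column $i$ has no unpaired cell), and the claim is that $\ee_i(D)=D'\neq 0$ implies that this operator sends $D'$ back to $D$. To prove it I would model the horizontal $i$-pairing as a bracket matching read from the top row downward --- a cell in column $i$ is an open bracket, a cell in column $i+1$ a close bracket, with same-row pairs matched first --- so that the cell moved by $\ee_i$, sitting at $(r_0,i+1)$, is the bottom-most unmatched close bracket. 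Since no unmatched close bracket lies below row $r_0$, the bracket matching strictly below row $r_0$ is unaffected by the move, while the relocated cell becomes exactly the top-most unmatched open bracket of $D'$; as $(r_0,i+1)$ is empty in $D'$, the un-rectification operator returns a cell there and recovers $D$ precisely. The symmetric statement then shows $\ee_i$ and this operator are mutually inverse partial bijections, which gives the injectivity used above. Alternatively, one may invoke the observation of Assaf and Gonz\'alez that $\ee_i$ is a rotation of the crystal raising operator $e_i$ together with the standard fact that $e_i$ is inverted by $f_i$ wherever it does not vanish, but I would include the direct bracket-matching argument so that the proposition is self-contained.
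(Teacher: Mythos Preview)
Your proof is correct. For weight-preservation you argue exactly as the paper does (each $\ee_i$ moves a cell within its own row), just with more care about why the target position is empty.

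For injectivity you take a more elaborate route than the paper. The paper's proof is a single sentence: rectification operators are rotated crystal raising operators, and raising operators are injective by definition (the crystal axiom $e_i(b)=b'\iff f_i(b')=b$ builds in an inverse). You instead construct the inverse by hand via a bracket-matching argument, showing that the ``un-rectification'' operator (push the topmost unpaired cell of column $i$ right) undoes $\ee_i$. You do mention the paper's shortcut at the end as an alternative. What your direct argument buys is self-containment: a reader need not accept the rotation-to-$e_i$ identification or the crystal axioms to see injectivity. What the paper's approach buys is brevity, since all the work has already been absorbed into the crystal formalism.
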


\begin{proof}
A rectification operator only pushes boxes to the left, so at every step, the number of boxes in each row remains unchanged. Injectivity follows from the observation that these can be viewed as rotated raising operators, and that raising operators are injective by definition. 
\end{proof}

\begin{theorem}[\cite{AG19},Theorem 5.33] \label{thm:rect-commute}
The rectification operators and the raising operators on diagrams commute. That is, given any diagram $D$ for which $\ee_c(D)\neq 0$, then for any row index $r\geq 1$, $e_r(D) \neq 0$ if and only if $e_r(\ee_c(D)) \neq 0$. Likewise, if $e_r(D) \neq 0$, then for any column index $c\geq 1$, $\ee_c(D) \neq 0$ if and only if $\ee_c(e_r(D)) \neq 0$. In this case, we have $\ee_c(e_r(D)) = e_r(\ee_c(D))$ for all values of $r$ and $c$ for which $\ee_c(D) \neq 0$ and $e_r(D) \neq 0$.
\end{theorem}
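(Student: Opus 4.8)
My plan is a local analysis exploiting that $e_r$ and $\ee_c$ each ``live on'' a thin strip. The behaviour of $e_r$ on a diagram $D$ depends only on, and $e_r$ alters only, the occupancy of rows $r$ and $r+1$, and $e_r$ performs a single box move $(r{+}1,j_0)\to(r,j_0)$; dually $\ee_c$ depends only on, and alters only, columns $c$ and $c+1$, performing $(r_0,c{+}1)\to(r_0,c)$. Hence $e_r$ can change the data governing $\ee_c$ only if $j_0\in\{c,c+1\}$, and $\ee_c$ can change the data governing $e_r$ only if $r_0\in\{r,r+1\}$. In the fully disjoint case $j_0\notin\{c,c+1\}$ and $r_0\notin\{r,r+1\}$ each operator acts verbatim on $D$ and on the other's image, the two moves occupy four distinct cells, $\ee_c(e_r(D))$ and $e_r(\ee_c(D))$ both equal the diagram obtained from $D$ by performing both moves, and each side is nonzero precisely when $e_r(D)$ (resp.\ $\ee_c(D)$) is; all three assertions follow at once. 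So the work is confined to the interacting case, $r_0\in\{r,r+1\}$ or $j_0\in\{c,c+1\}$ (and in the worst sub-case both).

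In the interacting case I would track the effect on the ``bracket word'' underlying the pairing. Restricting $e_r$ to rows $r,r+1$, discard columns with boxes in both rows, mark each remaining row-$(r{+}1)$ box as a close and each remaining row-$r$ box as an open, and match as usual; $e_r$ pushes down the rightmost unmatched close. The single move $\ee_c$ makes shifts one box between the adjacent columns $c,c+1$ inside row $r$ or $r+1$; since $c$ and $c+1$ are adjacent, a routine inspection of the finitely many occupancy patterns of the $2\times 2$ corner on rows $\{r,r+1\}$, columns $\{c,c+1\}$ shows that in every case the bracket word of $e_r$ is either literally unchanged or changed only by sliding a single bracket between slots $c$ and $c+1$, which never alters the matching --- \emph{except} in a couple of corner patterns where the word genuinely changes, and those patterns are exactly the ones incompatible with $r_0$ being the largest row attaining the maximum of $M^c(D,\cdot)$, hence excluded by $\ee_c(D)\neq 0$. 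For the quantitative bookkeeping (including the ``nonzero iff nonzero'' clauses, where the danger is that a perturbation flips the maximum across $0$) I would use the $M^c$ reformulation from the excerpt together with its mirror $N^r(D,j)=\#\{(r{+}1,t)\in D: t\le j\}-\#\{(r,t)\in D: t\le j\}$ for $e_r$ (from \cite{AS18b}, Lemma~2.2), noting that $\ee_c$ changes $N^r(D,\cdot)$ by $\pm1$ only at the single index $j=c$ and $e_r$ changes $M^c(D,\cdot)$ by $\pm1$ only at $s=r{+}1$; the corner constraints forced by the two maximality conditions are precisely what rules out the bad flips.

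The last ingredient is the fully interacting sub-case $r_0\in\{r,r+1\}$ and $j_0\in\{c,c+1\}$, where both operators reach for a box in the corner. If, say, $(r{+}1,c{+}1)\in D$ with $j_0=c{+}1$, $r_0=r{+}1$, and $(r,c)\notin D$, that box simply makes the diagonal trip $(r{+}1,c{+}1)\to(r,c)$ whichever operator goes first, and one checks the first operator does not disturb the other's choice; the subtler version, when $(r,c)\in D$ so the diagonal cell is blocked, forces a specific corner pattern that one analyses as in the previous paragraph. Assembling the cases gives $\ee_c(e_r(D))=e_r(\ee_c(D))$ whenever both sides are defined, and the biconditionals drop out of the same inequalities. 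I expect this corner bookkeeping to be the main obstacle: it is not a single identity but a disciplined enumeration, and the one genuinely delicate point is that in each potentially word-changing configuration one must invoke the appropriate maximality hypothesis ($\ee_c(D)\neq 0$ or $e_r(D)\neq 0$) to exclude it. A minor shortcut: the first and second biconditionals are related by the symmetry under which rectification operators are rotated raising operators, so it suffices to verify one of them together with the (symmetric) equality.
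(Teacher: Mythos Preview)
The paper does not prove this theorem: it is quoted verbatim as Theorem~5.33 of \cite{AG19} and used as a black box (to deduce Corollary~\ref{cor:rect-nonzero} and, ultimately, Theorem~\ref{thm:dem-subcrystal}). So there is no ``paper's own proof'' to compare against; any argument you supply is already more than what appears here.

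As to your sketch: the strategy is the natural one and is essentially how such results are proved --- localize to the $2\times2$ window where the two strips overlap, use the $M^c$/$N^r$ cumulative-count reformulations so that each operator perturbs the other's data at a single index, and then enumerate corner patterns. The symmetry observation (rectification is a quarter-turn of raising, as the paper also notes) is a legitimate shortcut that halves the biconditional work. What you have written, however, is a plan and not a proof: phrases like ``a routine inspection of the finitely many occupancy patterns'' and ``one checks the first operator does not disturb the other's choice'' are exactly where the content lives. In particular, the claim that a $\pm1$ perturbation of $N^r(D,\cdot)$ at the single index $j=c$ ``never alters the matching'' except in patterns excluded by maximality is the whole theorem in miniature; you would need to actually list those patterns and verify in each that the relevant maximality hypothesis ($r_0$ is the \emph{largest} row with $M^c(D,r_0)=M^c(D)$, resp.\ $j_0$ is the rightmost unmatched close) rules out the bad flip. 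Until that enumeration is on the page, the argument has a gap at precisely the point you yourself flag as ``the main obstacle.''
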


\begin{corollary}\label{cor:rect-nonzero}
Given a weak composition $\aa$, $\alpha = \flat(\aa)$, and $T\in \LKT(\aa)$, we have $R_\alpha(T) \neq 0$.
\end{corollary}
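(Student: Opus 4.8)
The plan is to reduce the statement, via the connectedness of the lock crystal and the commutation of the rectification operators with the crystal operators, to the single tableau $T_\aa$, and then to verify that case by tracking $\mathbb{D}(T_\aa)$ through the Rectification algorithm. First I would record the following consequence of Theorem~\ref{thm:rect-commute}: if $D$ is a diagram and $e_r(D)\neq 0$ for some $r\geq 1$, then for every composition $R$ of rectification operators we have $R(D)\neq 0$ if and only if $R(e_r(D))\neq 0$, and in that case $e_r(R(D))\neq 0$ and $R(e_r(D))=e_r(R(D))$. This is proved by induction on the number of rectification operators in $R$: the empty composition is the hypothesis, and the inductive step strips off the outermost operator $\ee_c$ and applies Theorem~\ref{thm:rect-commute} at the diagram produced by the remaining operators. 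The same statement with a lowering operator $f_r$ in place of $e_r$ then follows by feeding the diagram $f_r(D)$ into what was just proved, since $e_r(f_r(D))=D$.

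Next I would transport this to $\LKT(\aa)$. By the definition of the crystal operators on lock Kohnert tableaux, whenever $e_i$ or $f_i$ does not send $T\in\LKT(\aa)$ to $0$, the underlying diagram transforms by the corresponding diagram operator, so $\mathbb{D}(e_i(T))=e_i(\mathbb{D}(T))$ and $\mathbb{D}(f_i(T))=f_i(\mathbb{D}(T))$ are nonzero. By Theorem~\ref{thm:lock-connected} there is a path from an arbitrary $T\in\LKT(\aa)$ to $T_\aa$ inside $\LKT(\aa)$ using such operators; applying the lemma above one edge at a time along the image of that path under $\mathbb{D}$ shows that $R_\alpha(\mathbb{D}(T))\neq 0$ if and only if $R_\alpha(\mathbb{D}(T_\aa))\neq 0$. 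Hence it suffices to prove $R_\alpha(\mathbb{D}(T_\aa))\neq 0$.

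Finally I would compute with $T_\aa$ directly. One checks from the definition of lock Kohnert tableaux that $\mathbb{D}(T_\aa)$ is the right-justified diagram whose rows, read from the bottom, have lengths $\alpha=\flat(\aa)=(\alpha_1,\dots,\alpha_k)$, so that every nonempty column ends in column $m=\max(\aa)$. I would then show by induction on $i$ that after applying $R_{\alpha,i}\circ\cdots\circ R_{\alpha,1}$ the diagram has rows $1,\dots,i$ left-justified with lengths $\alpha_1,\dots,\alpha_i$ and rows $i+1,\dots,k$ unchanged, still right-justified to column $m$, and that every rectification operator applied during $R_{\alpha,i}$ is nonzero. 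In the inductive step one follows $R_{\alpha,i}$ block by block: within its $j$-th block the single box of row $i$ that starts in the $j$-th column from the right slides one column left under each successive $\ee_c$. The key point is that this box is the one that moves: at the relevant step the statistic $M^c(D,r)$, scanned from large $r$ downward, first attains its maximum at $r=i$, because the rows $i+1,\dots,k$ lying above row $i$ are right-justified and hence contribute $\geq 0$ to the running difference before row $i$ is reached (forcing the maximum over $r>i$ to be strictly smaller than $M^c(D,i)$), while the already left-justified rows $1,\dots,i-1$ lying below contribute $\leq 0$ (so nothing below row $i$ exceeds $M^c(D,i)$ either). Thus $r=i$ is the largest index attaining the maximum, the maximum is at least $1$, and $\ee_c$ pushes the box of row $i$ leftward while leaving every other row alone. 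Taking $i=k$ yields $R_\alpha(\mathbb{D}(T_\aa))\neq 0$, and in fact identifies $R_\alpha(\mathbb{D}(T_\aa))$ with the key diagram of $\flat(\aa)$.

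The step I expect to be the main obstacle is this last one: carefully verifying the inductive shape claim, in particular that throughout the $j$-th block of $R_{\alpha,i}$ the cell immediately to the left of the traveling box is empty in row $i$ (so that this cell's contribution of $+1$, rather than $0$, governs $M^c$), and that the sign pattern of the other rows' contributions holds regardless of how the parts $\alpha_1,\dots,\alpha_k$ are ordered. Everything else is routine bookkeeping on top of the two cited theorems.
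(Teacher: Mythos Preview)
Your proposal is correct and follows exactly the strategy of the paper's proof: reduce to $T_\aa$ via connectedness of the lock crystal (Theorem~\ref{thm:lock-connected}) together with the commutation of rectification and raising operators (Theorem~\ref{thm:rect-commute}), then verify the base case directly. The paper dispatches the base case as ``straightforward to check'' while you have spelled out the inductive verification that $R_\alpha$ carries $\mathbb{D}(T_\aa)$ to the key diagram of $\flat(\aa)$; apart from a harmless indexing slip (in block $j$ of $R_{\alpha,i}$ the traveling box is the $j$-th from the \emph{left} in row $i$, not from the right) your tracking of $M^c$ is sound.
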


\begin{proof}
It is straightforward to check that $R_\alpha(T_\aa)$ is the unique key Kohnert tableau of content $\aa$ and weight $\alpha$, and therefore $R_\alpha(T_\aa) \neq 0$. Then since rectification operators intertwine crystal operators on Kohnert diagrams by Theorem \ref{thm:rect-commute} and the lock crystal is connected by Theorem \ref{thm:lock-connected}, we must have $R_\alpha(T) \neq 0$ as well. 
\end{proof}

\begin{definition}
Given a positive integer $i\geq 1$, define the \emph{unlock operators} $u_i$ on Kohnert tableaux as follows. The \emph{string} $\ell$ for $\ell$ a label of a labeled Kohnert tableau is the set of boxes of that tableau with label $\ell$. A box $x$ in string $\ell$ is \emph{left justified} if every column to the left of $x$ contains a box with label $\ell$.
We say that a box $x$ in a string $\ell$ in column $i+1$ \emph{crosses} a string $\ell'\neq \ell$ when string $\ell'$ contains a box in column $i$ weakly above $x$ and a box in column $i+1$ strictly below $x$.
Let $x$ be the box in column $i+1$ that has minimal label $\ell$ among those in column $i+1$ that are not left justified. If no such $x$ exists, then $u_i$ returns $0$. Otherwise, $u_i$ returns the diagram resulting from iterating the following steps until $x$ is pushed into column $i$.

\begin{enumerate}[label=\arabic*.]
\item If $x$ does not cross any strings, then push $x$ one space to the left and terminate the algorithm. Otherwise, go to step 2.
\item Fix string $\ell'$ to be the string with highest row index in column $i$ among those strings that $x$ crosses. Let $y$ be the box in column $i+1$ of string $\ell'$ and swap the row indices of $x$ and $y$ so that $x$ with label $\ell$ is below $y$ with label $\ell'$ in column $i+1$. Return to step 1.
\end{enumerate}
\end{definition}

We will see that this is guaranteed to terminate. Each time the steps loop, the row index of $x$ strictly decreases, so the only case in which the loop could get stuck is if $x$ is directly to the right of the rightmost box of some other string. In this case, $x$ would not cross that string, but it cannot be pushed to the left either. Claim (1) of Lemma \ref{lemma:key-conditions} ensures that this cannot happen.

For $\comp{a}$ a weak composition with $m=\max(a_i)$ and $\alpha = \flat(\comp{a})$, let $U_{\alpha,i}$ denote the composition of unlock operators
\begin{equation}
   U_{\alpha, i} =  (u_{\alpha_i}\circ \cdots \circ u_{m-1})\circ \cdots \circ (u_2\circ \cdots \circ u_{m-\alpha_i+1})\circ (u_{1}\circ \cdots \circ u_{m-\alpha_i}),
\end{equation}
and let $U_\alpha$ denote the composition of unlock operators
\begin{equation}
    U_\alpha =  U_{\alpha,\ell(\alpha)} \circ \cdots \circ U_{\alpha,2}\circ U_{\alpha,1}.
\end{equation}

As with $R_\alpha$, we will sometimes refer to $U_\alpha$ as the \emph{Unlock algorithm} (for $\comp{a}$) and to each individual unlock operator that it is composed of as the \emph{steps} of the algorithm. We note that in general, the unlock operators are not well defined for all labeled diagrams. However, we will show later in Lemma \ref{lemma:unlock-well-defined} that for any lock Kohnert tableau $T$ of shape $\comp{a}$, $U_\alpha(T)$ is well defined.

\begin{figure}[ht]
	\begin{center}
	\begin{tikzpicture}[xscale=2.2,yscale=1.6]
		\node at (0,1) (T01) {$\tableau{ 5 &6 & 6 \\ &5 \\ 3 & 3 & 5 \\ & & 3 \\ & &1\\ \hline}$};
	\draw (-0.24,0.45)--(-0.24,1.55);
		\node at (1,1) (T11) {$\tableau{ 5 &6 & 6 \\ &5 \\ 3 & 3 & 5 \\ & & 3 \\ & 1\\ \hline}$};
	\draw (1-0.24,0.45)--(1-0.24,1.55);
		\node at (2,1) (T21) {$\tableau{ 5 &6 & 6 \\ &5 \\ 3 & 3 & 5 \\ & & 3 \\  1\\ \hline}$};
	\draw (2-0.24,0.45)--(2-0.24,1.55);
		\node at (3,1) (T31) {$\tableau{ 5 &5 & 6 \\ 6 \\ 3 & 3 & 5 \\ & & 3 \\  1\\ \hline}$};
	\draw (3-0.24,0.45)--(3-0.24,1.55);
		\node at (4,1) (T41) {$\tableau{ 5 &5 & 5 \\ 6 \\ 3 & 3 & 3 \\ & 6 \\  1\\ \hline}$};
	\draw (4-0.24,0.45)--(4-0.24,1.55);
		\draw[thick,->] (T01) -- (T11) node[midway,above] {$u_2$};
		\draw[thick,->] (T11) -- (T21) node[midway,above] {$u_1$};
		\draw[thick,->] (T21) -- (T31) node[midway,above] {$u_1$};
		\draw[thick,->] (T31) -- (T41) node[midway,above] {$u_2$};
		
	\end{tikzpicture}
	\caption{\label{fig:unlock-103032} For $\comp{a} = (1,0,3,0,3,2)$, we have $\alpha = (1,3,3,2)$ and $U_\alpha = u_2 u_1 u_1 u_2$. On the left is a lock Kohnert tableau of content $\comp{a}$ and each step of the Unlock algorithm for $\comp{a}$ on that tableau. Compare with Figure \ref{fig:rect-103032}.}
	\end{center}

\end{figure}
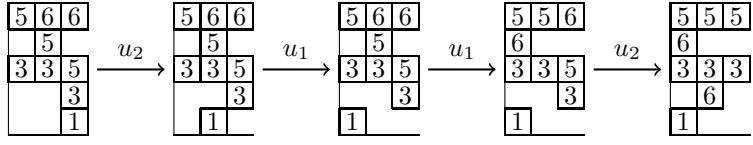

We also note that the order of the unlock operators in $U_\alpha$ is very intentionally chosen so that the boxes of $T \in \LKT(\aa)$ are left justified in a particular order. See Figure \ref{fig:unlock-103032} for a small example.

\begin{proposition}\label{prop:left-just-and-crossing}
Let $\aa$ be a weak composition with $\alpha = \flat(\aa)$ and with nonzero parts $\{a_{\ell_1},a_{\ell_2},\ldots, a_{\ell_k}\}$.
If $U_\alpha$ is well defined on $T \in \LKT(\comp{a})$, then in order from $i=1$ to $i=k$, $U_{\alpha,i}$ left justifies the boxes of string $\ell_i$ in order from left to right. Furthermore, at each step of the Unlock algorithm, a box $x$ with label $\ell$ can only cross strings with labels strictly smaller than $\ell$.
\end{proposition}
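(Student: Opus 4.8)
The plan is to prove both assertions simultaneously by induction on the progress of the Unlock algorithm, since the "crossing only smaller strings" property is exactly what licenses the left-justification order and vice versa. First I would set up the bookkeeping: after $U_{\alpha,1}, \ldots, U_{\alpha,i-1}$ have been applied, the claim to maintain as an invariant is that the boxes of strings $\ell_1, \ldots, \ell_{i-1}$ are left justified (these are the "smallest" strings, i.e., $\ell_1 < \ell_2 < \cdots$), and that within $U_{\alpha,i}$ the composition $(u_{1}\circ\cdots\circ u_{m-\alpha_i})$ followed by the later blocks processes columns in the order dictated by the definition so that, when $u_j$ fires, the minimal non-left-justified label in column $j+1$ is precisely $\ell_i$. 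The key structural input is the original lock condition: in $T\in\LKT(\aa)$ the labeling strictly decreases down columns (condition (4) of Definition~\ref{def:LKT}) and, via condition (1), the set of labels in column $\max(\aa)-a_i+1$ through $\max(\aa)$ is controlled, so small labels sit low and to the right. I would use this to show that at the start of $U_{\alpha,i}$ the already-justified strings $\ell_1, \ldots, \ell_{i-1}$ occupy an initial segment of columns in every row where they appear, hence a box $x$ with label $\ell_i$ being pushed left can meet, in the column immediately to its left, only strings that are either already justified (label $< \ell_i$) or strings it genuinely crosses.

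Next I would analyze a single application of $u_j$ during $U_{\alpha,i}$ and prove the crossing claim. Recall $x$ is the minimal-label non-left-justified box in column $j+1$; by the invariant this label is $\ell_i$. Suppose $x$ crosses a string $\ell'$. By the definition of crossing, $\ell'$ has a box in column $j$ weakly above $x$ and a box in column $j+1$ strictly below $x$. I claim $\ell' < \ell_i$. If $\ell' > \ell_i$: the box of $\ell'$ in column $j+1$ lies strictly below the box of $\ell_i$ in column $j+1$, but strings weakly descend left to right (condition (3) of Definition~\ref{def:LKT}, which is preserved through the algorithm) and larger labels — combined with the strict-decrease-down-columns behavior that the unlock steps are designed to respect — cannot sit below smaller ones in a way consistent with having started from $T_\aa$-reachable data; more directly, minimality of $\ell_i$ among non-left-justified labels in column $j+1$ forces any label $> \ell_i$ present in column $j+1$ to be left justified, so $\ell'$ would have a box in column $j$ in the same row context, contradicting the position of the crossing box below $x$. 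If $\ell' = \ell_i$ that is impossible since $\ell'\neq\ell$ by definition of crossing. Hence $\ell' < \ell_i$, i.e., $\ell'\in\{\ell_1,\ldots,\ell_{i-1}\}$. This is precisely the "crosses only strictly smaller strings" assertion, and it immediately gives termination-compatibility with the remark after the definition: the obstruction (x stuck to the right of the rightmost box of a string it does not cross) would require that string to be non-left-justified and $\le \ell_i$, contradicting minimality unless it equals $\ell_i$, which is excluded — this is where Claim~(1) of Lemma~\ref{lemma:key-conditions} will be invoked.

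Then I would close the induction on the order of left-justification within $U_{\alpha,i}$. The block structure $ (u_{\alpha_i}\circ \cdots \circ u_{m-1})\circ \cdots \circ (u_{1}\circ \cdots \circ u_{m-\alpha_i})$ sweeps the boxes of string $\ell_i$ leftward one "diagonal" at a time; I would check that after the first inner block $(u_1\circ\cdots\circ u_{m-\alpha_i})$ the leftmost box of string $\ell_i$ has reached column $1$ and is left justified, after the second block the next box of $\ell_i$ has reached column $2$, etc., so that $U_{\alpha,i}$ left-justifies the boxes of $\ell_i$ in left-to-right order and does not disturb the justification of $\ell_1, \ldots, \ell_{i-1}$ (each $u_j$ only moves the single box $x$ of label $\ell_i$ and swaps row indices within column $j+1$, never moving a box of a smaller, already-justified string out of its columns). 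Feeding the updated invariant into $U_{\alpha,i+1}$ completes the step.

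The main obstacle I anticipate is the second sentence of the proposition rather than the first: controlling precisely which strings $x$ can cross requires knowing that, at the moment $u_j$ fires inside $U_{\alpha,i}$, every label larger than $\ell_i$ appearing in column $j+1$ is already left justified. This is not literally one of the four defining conditions — it is a consequence of the $\LKT$ conditions (especially the column-strict condition and condition (1) pinning down the label sets of the rightmost columns) together with the carefully chosen order of the unlock operators. Pinning down this "large labels are already justified" invariant cleanly — and verifying that each swap in step~2 of $u_j$ preserves it — is the delicate bookkeeping I expect to occupy most of the proof; once it is in place, both conclusions fall out quickly, and it also feeds directly into the well-definedness statement of Lemma~\ref{lemma:unlock-well-defined}.
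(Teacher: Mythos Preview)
Your treatment of the first assertion is fine and matches the paper's ``by construction'' remark, though you are working much harder than necessary. The paper dispatches the entire proposition in two sentences, and the second sentence is where your proposal goes wrong.

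For the crossing claim you write: ``minimality of $\ell_i$ among non-left-justified labels in column $j+1$ forces any label $> \ell_i$ present in column $j+1$ to be left justified.'' This is backwards. Minimality of $\ell_i$ says only that every \emph{smaller} label in column $j+1$ is left justified; larger labels may very well fail to be left justified (and typically do, since they are still sitting in their original right-justified positions in $T$). Your anticipated ``main obstacle'' is built on the same false premise, so the invariant you plan to maintain is not actually true and the argument cannot close.

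The correct mechanism, which the paper states tersely, is this: strings $\ell'>\ell_i$ have not been touched at all by the time $U_{\alpha,i}$ runs (this is the inductive content), so they sit exactly where they were in $T$, where they weakly descend left to right and, by column-strictness of $T$, lie strictly above the original position of $x$ in $x$'s starting column. Since $x$ only moves south and west (pushes preserve the row, swaps strictly lower it), and the $\ell'$-string weakly rises as you move left, the $\ell'$ box in column $j+1$ remains strictly \emph{above} $x$ throughout. Hence the ``box of $\ell'$ in column $j+1$ strictly below $x$'' condition in the definition of crossing can never be met for $\ell'>\ell_i$. No left-justification of larger strings is needed or available; what you need instead is that they are untouched, which is exactly what the induction gives you once you know swaps are only with smaller strings.
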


\begin{proof}
The first claim on the order of boxes moved by $U_\alpha$ can be seen by construction from a straightforward examination of the definition of unlock operators and lock Kohnert tableaux.

The second claim follows from the facts that strings weakly descend from left to right, unlock operators can only change the positions of boxes in the southwest direction of the box being left justified, and 
a box $x$ with label $\ell$ in $T$ is strictly lower than any box $y$ with label $\ell_j>\ell$ in the same column.

\end{proof}

It would be nice if each lock crystal had a unique lowest weight element. In this case, we would only need to show that this unique element maps to a key Kohnert tableau via Rectification, and then we could use the connectivity of the lock crystal and the commutativity of rectification operators and raising operators to prove Theorem \ref{thm:dem-subcrystal}. 
This is unfortunately not the case, and so instead we organize the proof of Theorem \ref{thm:dem-subcrystal} as follows. It is easier to first assume that step by step for a given lock Kohnert tableau $T$, $R_\alpha(\mathbb{D}(T))$ and $U_\alpha(T)$ \emph{agree on the level of diagrams}. That is, if we let $R_\alpha = \ee_{j_t}\circ\cdots\circ \ee_{j_1}$ and $U_\alpha = u_{j_t} \circ\cdots\circ u_{j_1}$, then for all $1 \leq s \leq t$, we have
$$\ee_{j_s} \circ \cdots\circ \ee_{j_1}(\mathbb{D}(T)) = \mathbb{D}(u_{j_s}\circ \cdots \circ u_{j_1}(T)).$$

Given this assumption, we show that the resulting diagram $U_\alpha(T)$ is a key Kohnert tableau of content $\comp{a}$ (a consequence of Lemma \ref{lemma:key-conditions}). We then show that the assumption always holds that $R_\alpha$ and $U_\alpha$ agree on the level of diagrams for lock Kohnert tableaux (a claim of Lemma \ref{lemma:unlock-well-defined}). 
We begin with the following technical results (Lemma \ref{lemma:e-truncation} and Corollary \ref{cor:l-truncation}).

In all the lemmas below, $T$ is a lock Kohnert tableau of content $\comp{a} = (a_1,\ldots,a_m)$ that contains the labels $\ell_1 < \cdots < \ell_k$, and $\alpha = \flat(\aa)$. We also define a \emph{truncation} of $T$, denoted $T^{<\ell}$, by deleting all boxes of $T$ with label $\ell$ or larger. From the definition of lock Kohnert tableaux, $T^{<\ell}$ is clearly still a lock Kohnert tableau.

\begin{figure}[ht]
\begin{center}
	\begin{tikzpicture}[xscale = 2, yscale = 2]
	\draw (-0.263,0.56)--(-0.263,1.44);
	\draw (1-0.263,0.56)--(1-0.263,1.44);
	\node at (0,1) (T00) {$ \tableau{ 5 &6 & 6 \\ &5 \\ 3 & 3 & 5 \\ & & 3 \\ & &1\\ \hline}$};
	\node at (1,1) (T10) {$\tableau{  & &  \\ & \\ 3 & 3 &  \\ & & 3 \\ & &1\\ \hline}$};
	\end{tikzpicture}
	\caption{\label{fig:truncation} On the left is a lock Kohnert tableau $T$ and on the right is $T^{<5}$.}
\end{center}

\end{figure}
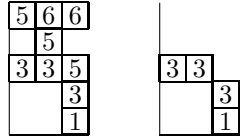

\begin{lemma}\label{lemma:e-truncation}
Fix $1 \leq p < k$ and $\ell > \ell_p$, and let $t$ be given by writing $R_{\alpha,p}\circ\cdots\circ R_{\alpha,1} = \ee_{p_t}\circ\cdots\circ \ee_{p_1}$. 
Then for all $1 \leq s < t$, $\ee_{p_s}$ pushes a box from position $(c+1,r)$ to $(c,r)$ in $\mathbb{D}(\ee_{p_{s-1}} \circ\cdots\circ \ee_{p_1}(T^{<\ell}))$ if and only if $\ee_{p_s}$ pushes a box from position $(c+1,r)$ to $(c,r)$ in $\mathbb{D}(\ee_{p_{s-1}}\circ\cdots\circ \ee_{p_1}(T))$.

\end{lemma}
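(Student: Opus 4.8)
The plan is to argue that the rectification operators composing $R_{\alpha,p}\circ\cdots\circ R_{\alpha,1}$ only ever touch columns $1$ through $m-\alpha_p+1$ or so, and more importantly, that none of the \emph{intermediate} steps (those indexed $s<t$) ever need to move a box whose label is $\ell$ or larger. Since $T^{<\ell}$ is obtained from $T$ by deleting exactly the boxes with label $\geq \ell$, the claim will follow from showing that the horizontal pairings used to determine each $\ee_{p_s}$ are unaffected by the presence or absence of those high-label boxes, at every step strictly before the last one. Recall from the definition of lock Kohnert tableaux (condition (4), strictly decreasing columns) together with condition (2) that any box with label $\ge \ell > \ell_p$ sits strictly above every box with label $\ell_p$ in its column, and in particular these boxes occupy the topmost rows of the columns in the right-justified region; I would make this precise first.

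First I would set up notation: write $R_{\alpha,p}\circ\cdots\circ R_{\alpha,1} = \ee_{p_t}\circ\cdots\circ\ee_{p_1}$ and note, from the explicit form of each $R_{\alpha,q}$, that $R_{\alpha,p}\circ\cdots\circ R_{\alpha,1}$ is precisely the sequence of moves designed to left-justify the strings $\ell_1,\ldots,\ell_p$ (this is the content of Proposition \ref{prop:left-just-and-crossing}, applied to the lock Kohnert tableau rather than the diagram, but the rectification operators agree with the relevant portion of the unlock algorithm on diagrams). The key structural observation is that after applying $\ee_{p_{s-1}}\circ\cdots\circ\ee_{p_1}$ to $T$, the boxes with label $\geq \ell$ still form, within each column they occupy, a contiguous block of topmost cells, and they have never been pushed left — because each $\ee_{p_q}$ for $q \leq s-1 < t$ pushes only a box of a string $\ell_i$ with $i \le p$, hence with label $\ell_i \le \ell_p < \ell$. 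I would prove this by induction on $s$, using the $M^i$-characterization of rectification: the quantity $M^i(D,r)$ counts boxes in columns $i$ and $i+1$ weakly above row $r$, and I claim the value of $M^i$ and the witnessing row $r$ are the same whether or not we include the high-label boxes, because those boxes contribute equally to columns $i$ and $i+1$ in the rows where they live (they sit above, and in left-justified blocks, so deleting them shifts nothing that matters for the max). Concretely: in $\mathbb{D}(\ee_{p_{s-1}}\circ\cdots\circ\ee_{p_1}(T))$, the boxes with label $\geq\ell$ in columns $p_s$ and $p_s+1$ are the top $b$ cells of column $p_s+1$ and the top $b'$ cells of column $p_s$; since $s<t$ the string currently being left-justified has not yet reached these columns in a way that would break the block structure, giving $b' \ge b$ or $b'=b$, so removing all of them decreases $M^{p_s}(D,r)$ by the same constant for every $r$ small enough to be below the block, and leaves it unchanged for $r$ inside or above the block — in either case the argmax row $r$ (which lies strictly below the high-label block, since that is where the unjustified low-label boxes are) and the max value of $M^{p_s}$ are preserved. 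Hence $\ee_{p_s}$ makes the identical move in both diagrams.

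The main obstacle I anticipate is pinning down exactly why the argmax row always lies strictly below the block of high-label boxes, and why that block structure is preserved through step $s-1$ — i.e., justifying that the step-by-step behavior of $R_{\alpha,p}\circ\cdots\circ R_{\alpha,1}$ on $T$ genuinely only disturbs low-label boxes until the very last move. This is where I would lean on Proposition \ref{prop:left-just-and-crossing}: the moves in $R_{\alpha,q}$ for $q<p$ left-justify strings $\ell_1,\ldots,\ell_{p-1}$, and the moves of $R_{\alpha,p}$ up through step $t-1$ left-justify all but the last box of string $\ell_p$; throughout, a box being moved "crosses" only strings with strictly smaller labels, so a box with label $\geq\ell$ is never moved and never crossed, hence stays put in its topmost-contiguous position. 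I would also need to handle the edge case where a high-label box in column $p_s$ or $p_s+1$ has, by a previous step, been left of where it started — but since such boxes are never the ones pushed, and pushes only move boxes left by one column into a vacated cell below any block, the block-of-tops invariant survives. Once the invariant is in hand, the "if and only if" is immediate from the $M^i$ computation, and the lemma follows.
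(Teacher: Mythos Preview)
Your approach has the right shape — track the difference between the two diagrams as the set of high-label boxes sitting at the tops of their columns, and argue that adding or removing these top boxes does not change the row where $M^c$ achieves its maximum. That is essentially what the paper does. But there are two genuine problems in how you propose to fill in the key steps.

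First, and most seriously, your justification for why the high-label block structure is preserved through step $s-1$ is circular. You write that you would ``lean on Proposition~\ref{prop:left-just-and-crossing}'' to conclude that each step of $R_{\alpha,q}$ left-justifies a box of string $\ell_q$ and only crosses lower strings. But Proposition~\ref{prop:left-just-and-crossing} is a statement about the \emph{unlock} operators on labeled tableaux, not about rectification on diagrams. Transferring it to rectification requires knowing that $\ee_{p_s}$ and $u_{p_s}$ agree on underlying diagrams step by step --- which is exactly Lemma~\ref{lemma:unlock-well-defined}, proved \emph{after} and \emph{using} the present lemma (through Corollary~\ref{cor:l-truncation}). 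So you cannot invoke the unlock picture here. The paper avoids this entirely by arguing purely at the diagram level: it isolates four elementary cases describing how $M^c$ and its argmax row $r_0$ behave when a single topmost box is added to or removed from column $c$ or $c+1$, and then observes that adding or deleting the high-label strings of a lock Kohnert tableau one at a time always lands in one of those cases. The block-on-top invariant then comes for free from the induction on $s$ itself (if $\ee_{p_1},\ldots,\ee_{p_{s-1}}$ move the same cells in both diagrams, those cells are not high-label ones, so the high-label boxes have not moved) together with the facts that rectification preserves rows and that lock strings weakly descend --- no appeal to unlock or to Proposition~\ref{prop:left-just-and-crossing} is needed or permitted.

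Second, a smaller but real error: you assert $b'\geq b$ for the counts of high-label boxes atop columns $p_s$ and $p_s+1$. It is the other way around. Lock Kohnert strings are \emph{right}-justified, so every high-label string occupying column $p_s$ also occupies column $p_s+1$, giving $b\geq b'$. Hence passing from $T$ to $T^{<\ell}$ can strictly decrease $M^{p_s}$, and you must separately verify that it stays positive so that $\ee_{p_s}$ still acts nontrivially. The paper handles this via Corollary~\ref{cor:rect-nonzero}, which guarantees that the rectification sequence is nonzero on the truncation; your sketch does not account for this, and your claim that ``the value of $M^i$ and the witnessing row $r$ are the same'' is false as stated --- only the argmax row survives, not the value.
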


\begin{proof}
Let $D$ be an arbitrary Kohnert diagram and let columns $c,c+1$ be such that column $c+1$ is nonempty. Furthermore, let $h_c,h_{c+1}$ be the highest row indices occupied by boxes in columns $c,c+1$ respectively, where $h_c = 0$ if column $c$ is empty. Suppose that $M^c(D) > 0$ and $r_0$ is the highest row index for which $M^c(D,r)$ achieves its maximum. Then the following hold from the definition of $M^c$ by examining $M^c(D_i,r)$ compared to $M^c(D,r)$ in each case over all rows.

\begin{enumerate}
    \item Let $r_{c+1} > h_{c+1}$ and let $D_1$ be obtained from $D$ by adding a box to position $(c+1, r_{c+1})$. Then $r_0$ is the highest row index for which $M^c(D_1,r)$ achieves its maximum.
    \item Let $r_c \geq r_{c+1}$ with $r_c > h_c$ and $r_{c+1} > h_{c+1}$. Obtain $D_2$ from $D$ by adding boxes to positions $(c,r_c)$ and $(c+1,r_{c+1})$. Then $r_0$ is the highest row index for which $M^c(D_2,r)$ acheives its maximum.
    \item Suppose that $ h_{c+1} > r_0$ and obtain $D_3$ from $D$ by removing the box in position $(c+1, h_{c+1})$. If $M^c(D_3)>0$, then $r_0$ is the highest index for which $M^c(D_3,r)$ achieves its maximum.
    \item Suppose $h_c \geq h_{c+1} > r_0$, and obtain $D_4$ from $D$ by removing the boxes in positions $(c,h_c)$ and $(c+1, h_{c+1})$. If $M^c(D_4)> 0$, then $r_0$ is the highest index for which $M^c(D_4,r)$ achieves its maximum.
\end{enumerate}

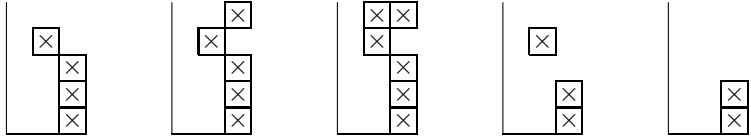
\begin{figure}[ht]
	\begin{center}
	\begin{tikzpicture}[xscale=2.2,yscale=1.6]
\draw (-0.24,0.447)--(-0.24,1.55);
\draw (1-0.24,0.447)--(1-0.24,1.55);
\draw (2-0.24,0.447)--(2-0.24,1.55);
\draw (3-0.24,0.447)--(3-0.24,1.55);
\draw (4-0.24,0.447)--(4-0.24,1.55);
	\node at (0,1) (T00) {$ \tableau{& & \\& \times \\& & \times \\& & \times \\& & \times\\ \hline}$};
	\node at (1,1) (T10) {$ \tableau{ &&\times  \\ &\times \\ && \times \\ && \times \\ && \times\\ \hline}$};
	\node at (2,1) (T20) {$ \tableau{& \times &\times  \\& \times \\ && \times \\ && \times \\ && \times\\ \hline}$};
	\node at (3,1) (T30) {$ \tableau{ &  \\ &\times \\ &  \\ && \times \\& & \times\\ \hline}$};
	\node at (4,1) (T40) {$\tableau{ &  \\ &  \\ &  \\ && \times \\ && \times\\ \hline}$};
	\end{tikzpicture}
	\caption{\label{fig:four-diagrams-example} In order from left to right, we have an example of a possible diagram $D$ and diagrams $D_1$ through $D_4$. In all cases, $E^2$ pushes the box in position $(1,3)$ to $(1,2)$.}
	\end{center}
\end{figure}

See Figure \ref{fig:four-diagrams-example} for an example of each case. By the definition of lock Kohnert tableaux, going from $T^{<\ell}$ to $T$ by adding back strings one at a time either has no effect on a pair of columns $c,c+1$ or it has the effect of one of the cases $(1)$ or $(2)$ above, which proves one direction of the claim.

Similarly, removing strings one at a time from $T$ to obtain $T^{<\ell}$ either has no effect on a pair of columns $c,c+1$ or it has the effect of one of the cases $(3)$ or $(4)$ above. We do need to check that it is still true that the $M^c(D_3)>0$ and $M^c(D_4)>0$ conditions hold in cases $(3)$ and $(4)$ respectively. Using Corollary \ref{cor:rect-nonzero}, we see that $R_{\alpha',p}\circ\cdots\circ R_{\alpha',1}$ is nonzero on $T^{<\ell}$, which must mean that $M^c(D_i)>0$ does hold for cases (3) and (4).

\end{proof}

Using the same notation as above, we obtain the following corollary.

\begin{corollary}\label{cor:l-truncation}
Suppose that for every $1\leq s \leq t$, $u_{p_s}\circ\cdots\circ u_{p_1}(T)$ is well defined and we have
$$\ee_{p_s}\circ\cdots\circ \ee_{p_1}(\mathbb{D}(T)) = \mathbb{D}(u_{p_s} \circ\cdots\circ u_{p_1}(T)).$$
Then for all $1\leq s \leq t$, $u_{p_s} \circ\cdots\circ u_{p_1}(T^{<\ell_q})$ is well-defined and we have
$$\ee_{p_s}\circ\cdots\circ \ee_{p_1}(\mathbb{D}(T^{<\ell_q})) = \mathbb{D}(u_{p_s} \circ\cdots\circ u_{p_1}(T^{<\ell_q})).$$

\end{corollary}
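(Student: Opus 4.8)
The plan is to establish both conclusions — that $u_{p_s}\circ\cdots\circ u_{p_1}$ is well defined on $T^{<\ell_q}$, and that $\mathbb{D}\big(u_{p_s}\circ\cdots\circ u_{p_1}(T^{<\ell_q})\big) = \ee_{p_s}\circ\cdots\circ\ee_{p_1}\big(\mathbb{D}(T^{<\ell_q})\big)$ — simultaneously by induction on $s$, with the trivial base case $s=0$. Writing $T_s = u_{p_s}\circ\cdots\circ u_{p_1}(T)$ and $\widehat{T}_s = u_{p_s}\circ\cdots\circ u_{p_1}(T^{<\ell_q})$, the hypothesis supplies that each $T_s$ is defined with $\mathbb{D}(T_s) = \ee_{p_s}\circ\cdots\circ\ee_{p_1}(\mathbb{D}(T))$, and the goal of the inductive step is to show that $u_{p_s}$ runs on $\widehat{T}_{s-1}$ by exactly the same internal loop — the same chosen box, the same strings crossed, the same swaps, the same final push — as it does on $T_{s-1}$. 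Granting this, the one diagram-changing move that $u_{p_s}$ makes (pushing a single box from a column $c+1$ to column $c$ in the same row) is the same for $\widehat{T}_{s-1}$ and $T_{s-1}$; by hypothesis this is precisely the move $\ee_{p_s}$ performs on $\mathbb{D}(T_{s-1}) = \ee_{p_{s-1}}\circ\cdots\circ\ee_{p_1}(\mathbb{D}(T))$, and by Lemma~\ref{lemma:e-truncation} $\ee_{p_s}$ performs this very move on $\ee_{p_{s-1}}\circ\cdots\circ\ee_{p_1}(\mathbb{D}(T^{<\ell_q}))$, which equals $\mathbb{D}(\widehat{T}_{s-1})$ by the inductive hypothesis. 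That closes the induction.

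So the crux is the claim that $u_{p_s}$ runs identically on $\widehat{T}_{s-1}$ and $T_{s-1}$, and I would derive it from two facts. First, by Proposition~\ref{prop:left-just-and-crossing} applied to $T$, throughout the first $p$ blocks $R_{\alpha,p}\circ\cdots\circ R_{\alpha,1} = u_{p_t}\circ\cdots\circ u_{p_1}$ the box $x$ selected at each step carries a label in $\{\ell_1,\ldots,\ell_p\}$ (it is a box of the string currently being left justified) and crosses only strings whose label is strictly smaller than that of $x$; since every label in play is $<\ell_q$, the deleted strings $\ell_q,\ldots,\ell_k$ are never the chosen box and are never crossed, hence never swapped. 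Second, deleting the strings $\ell_q,\ldots,\ell_k$ changes neither the set of columns occupied by any surviving string nor — by the induction — the positions that the surviving strings occupy in $T_{s-1}$ versus $\widehat{T}_{s-1}$, so a box of a surviving string is left justified in $\widehat{T}_{s-1}$ if and only if it is in $T_{s-1}$. Combining these, the minimal-label box of the active column $p_s+1$ that is not left justified is the same box $x$ in both tableaux (using that the strings handled by earlier blocks remain fully left justified, so the minimum is attained at the string being processed at step $s$), and then — since every string $x$ crosses is a surviving string sitting in the same position in both tableaux — the loop defining $u_{p_s}$ unfolds identically; in particular it terminates on $\widehat{T}_{s-1}$, so $u_{p_s}(\widehat{T}_{s-1})$ is well defined, and $\mathbb{D}(\widehat{T}_s)$ differs from $\mathbb{D}(\widehat{T}_{s-1})$ by the same move as $\mathbb{D}(T_s)$ differs from $\mathbb{D}(T_{s-1})$.

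I expect the main obstacle to be making the selection-of-$x$ argument airtight: one must rule out that removing the tall strings $\ell_q,\ldots,\ell_k$ could promote some other object — a box of a surviving string, or no box at all — to be the minimal non-left-justified box in the active column, and this requires carrying, step by step, the invariant underlying the first claim of Proposition~\ref{prop:left-just-and-crossing}, namely that strings finished in earlier blocks stay left justified and that no box of the string $\ell_i$ currently being processed is moved before its turn. A secondary point needing care is the bookkeeping that turns "the same internal swaps plus one final push" into the stated equality of underlying diagrams: the swaps inside an unlock operator merely permute labels within a single column and so are invisible to $\mathbb{D}(\cdot)$, so only the final push registers there — matching exactly the single-box move produced by the corresponding rectification step via Lemma~\ref{lemma:e-truncation}.
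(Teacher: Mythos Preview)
Your proposal is correct and follows essentially the same approach as the paper. Both arguments use Proposition~\ref{prop:left-just-and-crossing} to see that the unlock steps $u_{p_1},\ldots,u_{p_t}$ act only on strings $\ell_1,\ldots,\ell_p$ and therefore unfold identically on $T$ and on $T^{<\ell_q}$, and both then invoke Lemma~\ref{lemma:e-truncation} to match the resulting single-box diagram moves with those of the rectification operators; the paper states this more tersely while you make the step-by-step induction explicit. One small expository point: the invariant you appeal to --- that the surviving strings occupy the same positions in $T_{s-1}$ and $\widehat{T}_{s-1}$ --- is stronger than the diagram equality you name as your inductive hypothesis, so it would be cleaner to build that directly into the hypothesis (it is precisely what ``the loop unfolds identically'' propagates).
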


\begin{proof}

Proposition 1.5 tells us that the operators $U_{\alpha,p}\circ\cdots\circ U_{\alpha,1}$ left justify the boxes in strings $\ell_1$ through $\ell_p$. By the weakly decreasing row conditions and strictly decreasing column conditions on lock Kohnert tableaux as well as the fact that unlock operators only push boxes southwest, the left justification of the strings $\ell_1,\ldots,\ell_p$ can only depend on the positions of boxes in those strings. Therefore, removing any string $\ell_m > \ell_p$ from $T$ has no effect on the steps of the Unlock algorithm up through the left justification of string $\ell_p$. It follows that $u_{p_s}\circ\cdots\circ u_{p_1}(T^{<\ell_q})$ and $u_{p_s}\circ\cdots\circ u_{p_1}(T)$ have identical strings $\ell_1,\ldots,\ell_p$ for all $1\leq s \leq t$, then
combining with Lemma \ref{lemma:e-truncation} proves the claim.

\end{proof}

We will use this corollary in proving the following lemma that if $U_\alpha$ and $R_\alpha$ agree on the level of diagrams on lock Kohnert tableaux,
then the Unlock algorithm preserves the properties necessary for the resulting tableau to be a key Kohnert tableau of the same content as the inputted lock Kohnert tableau. In particular, compare claims (2), (3), and (4) to the definition of key Kohnert tableaux.

\begin{lemma} \label{lemma:key-conditions}
Write 
$$R_\alpha = R_{\alpha,k}\circ\cdots\circ R_{\alpha,1} = \ee_{k_t}\circ\cdots\circ \ee_{k_1}$$
$$U_\alpha = U_{\alpha,k}\circ\cdots\circ U_{\alpha,1} = u_{k_t} \circ \cdots\circ u_{k_1}$$
and suppose that for $1\leq s \leq t$, $u_{k_s}\circ\cdots\circ u_{k_1}(T)$ is well defined and we have 
$$\ee_{k_s}\circ\cdots\circ \ee_{k_1}(\mathbb{D}(T)) = \mathbb{D}(u_{k_s}\circ\cdots\circ u_{k_1}(T))$$
Then the following hold:
\begin{enumerate}
    \item An operator $u_{k_i}$, $1\leq i \leq s$, never tries to push a box $x$ from $(c+1,r)$ to $(c,r)$ where column $c$ contains the rightmost box of a different string in some row weakly above $r$.
    \item After all steps of $U_{\alpha,i}$ have been completed, the string $\ell_i$ is left justified and weakly descending in row index from left to right and remains so through every subsequent step of $U_\alpha$. Furthermore, while the steps of $U_{\alpha,i}$ are in progress, all other strings than $\ell_i$ maintain their weakly decreasing property.
    \item (inversions) For each intermediate labeled diagram $u_{k_i}\circ\cdots\circ u_{k_1}(T)$ with $1 \leq i \leq t$, if a column $c$ has boxes $x,y$ where $x$ is both below $y$ and has a larger label, then in column $c+1$, there is a box $z$ strictly above the row index of $x$ with the same label as $y$. 
    \item (flagged) For each intermediate diagram $u_{k_i}\circ\cdots\circ u_{k_1}(T)$ with $1 \leq i \leq t$, every box with label $\ell$ is no higher than row $\ell$.
\end{enumerate}

\end{lemma}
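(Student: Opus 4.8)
The plan is to prove all four claims simultaneously by a single induction that follows the steps of the Unlock algorithm, i.e.\ by induction on $i$ (the index of the unlock operator $u_{k_i}$ being applied), while also invoking the outer induction from Proposition~\ref{prop:left-just-and-crossing} and the truncation tools of Lemma~\ref{lemma:e-truncation} and Corollary~\ref{cor:l-truncation}. Since the hypothesis of the lemma is precisely that $R_\alpha$ and $U_\alpha$ agree on the level of diagrams, at every step we may freely identify the underlying diagram of $u_{k_s}\circ\cdots\circ u_{k_1}(T)$ with $\ee_{k_s}\circ\cdots\circ\ee_{k_1}(\mathbb{D}(T))$, and in particular we know (Corollary~\ref{cor:rect-nonzero}) that every rectification step is nonzero, so each unlock step genuinely pushes a box into the next column to the left.

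First I would establish (2), since it is largely structural and the other claims lean on it. The fact that $U_{\alpha,i}$ left-justifies string $\ell_i$ from left to right is Proposition~\ref{prop:left-just-and-crossing}; what must be added is that once left-justified, string $\ell_i$ stays weakly descending and left-justified under all later steps, and that the \emph{other} strings stay weakly descending while $U_{\alpha,i}$ runs. For the ``stays descending'' part, note that within a single application of $u_c$ the only box that moves \emph{left} is the traveling box $x$ (of the minimal non-left-justified label), while other boxes only get their row indices swapped \emph{downward} within column $c+1$ (step~2); I would check that a downward swap of $y$ in column $c+1$ past $x$ cannot break the weak-descent of string $\ell'$ using the ordering relations among labels in a column (a box of label $\ell$ lies strictly below any box of larger label in the same column, together with the observation from Proposition~\ref{prop:left-just-and-crossing} that $x$'s label is strictly smaller than $\ell'$). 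For ``string $\ell_i$ stays left-justified later,'' I would use that $U_{\alpha,i+1},\dots$ only push boxes southwest and only move boxes of labels $\ge\ell_{i+1}>\ell_i$ as the traveling box, so a box of string $\ell_i$ is never removed from a column.

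Next (1): the concern is the algorithm getting ``stuck'' — $x$ in column $c+1$ directly right of the rightmost box $w$ of some other string $\ell'$, unable to move left and (by definition) not crossing $\ell'$. Suppose this happens during $U_{\alpha,i}$, so $x$ has label $\ell_i$ (or, mid-iteration, some label that started as $\ell_i$; but labels of the traveling box never change, so $x$ has label $\ell_i$ throughout). Because $R_\alpha$ and $U_\alpha$ agree on diagrams and the rectification step is nonzero, the diagram \emph{does} admit the move at this stage, i.e.\ $M^c(D)>0$ with its top-most maximizing row at the row of $x$; I would derive a contradiction with $w$ being the rightmost box of string $\ell'$ and $x$ sitting immediately to its right, using that the inductive hypotheses (2) and (4) — applied to the earlier steps — force string $\ell'$ to have already been left-justified (if $\ell'<\ell_i$) hence $w$ cannot be a barrier, or else that $\ell'>\ell_i$ forces $w$ to lie strictly above $x$ by the column-ordering, contradicting ``directly right of.'' This is the step I expect to be the main obstacle, because it requires carefully reconciling the diagram-level information ($M^c>0$) with the label-level bookkeeping, and getting the boundary cases (e.g.\ $x$ in the first iteration versus a later iteration of the same $u_c$) exactly right.

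Finally (3) and (4), which I would prove together step by step. For (4) (flagged), initially $T$ is a lock Kohnert tableau so property~(2) of Definition~\ref{def:LKT} gives that every box of label $\ell$ is no higher than row $\ell$; I would show each $u_{k_i}$ preserves this. The only upward motion is a swap in step~2, moving some box $y$ of label $\ell'$ up to a row that was occupied by the traveling box $x$ of strictly smaller label $\ell<\ell'$; since $x$ was at row $\le\ell<\ell'$, after the swap $y$ is at a row $\le\ell'$ — wait, $y$ moves to the row $x$ was in, which is $\le\ell\le\ell'-1<\ell'$, so flaggedness is preserved, and $x$ only moves down. For (3) (inversions), I would track how the inversion condition can be created or destroyed by a single $u_c$ step: the left-push of $x$ into column $c$ can create a new vertical pair in column $c$, and the swaps in column $c+1$ can rearrange pairs there; using claim (1) (so $x$ never lands directly above the rightmost box of another string) together with the crossing structure — each string $\ell'$ that $x$ crosses has a box in column $c$ weakly above $x$'s final position and a box in column $c+1$ strictly below — I would verify the required witness box $z$ in column $c+1$ exists with label equal to that of the box immediately above the new inversion. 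Throughout (3) and (4), Corollary~\ref{cor:l-truncation} lets me pass to the truncation $T^{<\ell}$ when I need to argue about the relative configuration of only the small labels, which keeps the case analysis finite and matches the diagram-level picture supplied by Lemma~\ref{lemma:e-truncation}.
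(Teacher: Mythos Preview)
There are two genuine errors in your proposal that break the argument, both stemming from a reversed reading of the crossing relation. By Proposition~\ref{prop:left-just-and-crossing}, the traveling box $x$ of label $\ell$ can only cross strings $\ell'$ with $\ell'<\ell$; in step~2 of an unlock operator, the box $y$ of that crossed string lies strictly \emph{below} $x$ in column $c+1$, and after the swap $y$ moves \emph{up} to where $x$ was. In your treatment of~(2) you say ``other boxes only get their row indices swapped downward,'' which is the wrong direction; and in your treatment of~(4) you write ``$y$ of label $\ell'$ \dots\ strictly smaller label $\ell<\ell'$'' and then use $r\le\ell<\ell'$ to conclude flaggedness for $y$. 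Since in fact $\ell'<\ell$, knowing $r\le\ell$ for the traveling box gives no bound $r\le\ell'$, so your flagged argument collapses. The paper handles~(4) by a completely different mechanism: the leftmost box of any already-justified string $\ell'<\ell_p$ is never touched, and by claim~(2) the rest of that string stays weakly below its leftmost box; thus flaggedness is inherited from the leftmost box, without tracking individual upward swaps.

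Your sketch of~(1) also has a gap. You argue that if $\ell'<\ell_i$ then string $\ell'$ is already left-justified ``hence $w$ cannot be a barrier,'' but nothing about left-justification alone prevents the rightmost box $w$ of $\ell'$ from sitting in column $c$ weakly above $x$. The paper's argument is a counting one: truncate to $T'^{<\ell}$ (so only smaller, already-justified strings remain in columns $c,c+1$), observe $M^{c}>0$ there by Lemma~\ref{lemma:e-truncation}, delete $x$ and the offending $w$ (which cannot decrease $M^c$), and then note that each remaining left-justified weakly-descending string contributes $0$ or $-1$ to $M^c(\cdot,r)$ for every $r$, forcing $M^c\le 0$ --- a contradiction. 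You mention $M^c(D)>0$ but never run this contradiction. Finally, your treatment of~(2) omits its hardest part: showing that string $\ell_p$ \emph{itself} is weakly descending after $U_{\alpha,p}$. The paper proves this by an inner induction comparing consecutive boxes $x_i,x_{i+1}$ of string $\ell_p$, tracking the sequence of swap-rows $r_0>r_1>\cdots>r_m$ of $x_i$ and arguing via~(1) that $x_{i+1}$ is forced weakly below each $r_j$ in turn; this is not covered by ``the traveling box only moves left.''
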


\begin{proof}

We proceed by induction on the strings of $T$ in increasing value of label. For the base case, the claims of Proposition \ref{prop:left-just-and-crossing} make it straightforward to check that while applying $U_{\alpha,1}$, the claims hold at every step. Now suppose that for $1 < p \leq k$, the claims hold through all steps of $U_{\alpha,1},U_{\alpha,2},\ldots,U_{\alpha,p-1}$.

\textbf{Proof of claim (1).} Suppose that all conditions hold up to some $u_{k_i}$, and let $u_{k_{i-1}}\circ\cdots\circ u_{k_1}(T) = T'$. Suppose that $u_{k_i}$ chooses a box $x$ with label $\ell$. Conduct all swaps of $x$ that occur in step 2 while applying $u_{k_i}$ to $T'$, but stop just before $u_{k_i}$ tries to push $x$ left after all swaps have occurred. Let $y$ be the rightmost box of some other string that is in column $k_i$ and weakly above $x$. At this point, the underlying diagram is unchanged, so if we delete all boxes with labels larger than $x$ to get $T^{\prime<\ell}$, then by Lemma \ref{lemma:e-truncation}, $R_\alpha(\mathbb{D}(T)) \neq 0$ means that $\ee_{k_i}(\mathbb{D}(T^{\prime<\ell})) \neq 0$, so $M^{k_i}(\mathbb{D}(T^{\prime<\ell}))>0$. 

Since $x$ is weakly below $y$, deleting both $x$ and $y$ from $T^{\prime<\ell}$ to get $T^{\prime\prime<\ell}$ preserves $M^{k_i}(\mathbb{D}(T^{\prime\prime<\ell}))>0$. Since all smaller labeled strings are left justified, columns $k_i,k_i+1$ of $T^{\prime\prime<\ell}$ can either contain the rightmost box of a string or one box in each column from a string. Therefore, since smaller labeled strings are also weakly decreasing from left ot right, each string must contribute either $0$ or $-1$ to a given row, and so it must hold that $M^{k_i}(\mathbb{D}(T^{\prime\prime<\ell}))\leq 0$, which is a contradiction. Therefore, condition (1) must hold.

\textbf{Proof of claim (2).} We observe that if all strings with labels smaller than $\ell_p$ are weakly decreasing, then by definition, any swaps that occur during step 2 of an unlock operator between a box of string $\ell_p$ and a string $\ell_s < \ell_p$ will preserve the weakly descending property of string $\ell_s$. Proposition \ref{prop:left-just-and-crossing} tells us that an unlock operator trying to push a box with label $\ell_p$ left cannot change the position of any boxes in a string $\ell_t > \ell_p$. Therefore, strings $\ell_s > \ell_p$ remain weakly descending because they are in the original tableau $T$.

It remains to check that string $\ell_p$ is weakly descending from left to right after the steps of $U_{\alpha,p}$ are completed. Index the boxes of string $\ell_p$ from left to right as $x_1,\ldots, x_t$. Suppose that for all $x_j$ for $1< j \leq i < t$ it holds that $x_j$ is weakly lower than $x_{j-1}$ after they have been left justified, where the base case for $x_1$ is vacuously true. Suppose also that over the course of being left justified, $x_i$ was swapped $m$ times from positions $(c_1,r_0),\ldots,(c_m,r_{m-1})$ to $(c_1,r_1),\ldots,(c_m,r_m)$ respectively, with $r_0 > r_1 > \cdots > r_m$ and $c_1 \geq c_2 \geq \cdots \geq c_m$, and index the respective strings that $x_i$ swaps with as $\ell_{i_1},\ldots, \ell_{i_m}$.

Since string $\ell_p$ is weakly decreasing to begin with, $x_{i+1}$ must start in some row $r_0'\leq r_0$. We know that string $\ell_{i_1}$ has a box in position $(c_1,r_0)$, since $x_i$ swapped from position $(c_1,r_0)$ to $(c_1,r_1)$. By condition (1), $x_{i+1}$ cannot end up in the same column and strictly lower than a box in string $\ell_{i_1}$ unless there is some column $c_1' > c_1$ in which $x_{i+1}$ either swaps with string $\ell_{i_1}$ or swaps with some other string such that it ends up below some box of string $\ell_{i_1}$ in column $c_1'$. In either case, since string $\ell_{i_1}$ was already weakly decreasing before $x_i$ swapped with it in column $c_1$, its box in column $c_1+1$ must have a row index weakly less than $r_1$, and so by the time $x_{i+1}$ is pushed into column $c_1$, it must have a row index $r_1' \leq r_1$. 
If $r_1' \leq r_k$, then we are done. If we suppose instead that  $r_j \geq r_1' > r_{j+1}$ for some $1 \leq j < k$, then we can repeat the above argument with string $\ell_{i_{j+1}}$ to show that $x_{i+1}$ must end up in some row $r_2'\leq r_{j+1}$ before it reaches column $c_{j+1}$. Iterating this eventually forces $x_{i+1}$ to end up weakly below row $r_k$, and therefore weakly below $x_i$. Since $i$ was arbitrary, the entire string $\ell_p$ must be weakly decreasing left to right.

\textbf{Proof of claim (3).} By Proposition \ref{prop:left-just-and-crossing}, no strings $\ell_s> \ell_p$ have inversions at any step of $U_{\alpha,1},\ldots,U_{\alpha,p}$. 

If an unlock operator swaps a box $x$ of $\ell_p$ so that it is below the string $\ell_t<\ell_p$ in the same column, the operator terminates with a left push, so combined with the weakly decreasing property of string $\ell_t$, $x$ satisfies the inversion condition with the boxes of string $\ell_t$ directly after that operator is applied. Each successive unlock operator that left justifies $x$ moves it left or down, so condition (2) ensures that $x$ continues to satisfy the inversion condition with string $\ell_t$. Otherwise, $x$ stays above string $\ell_t$, and the inversion condition is also satisfied.

It remains to show that, given an intermediate diagram in which inversion conditions are satisfied everywhere at all previous steps, any subsequent swaps that occur in $U_{\alpha,p}$ do not violate inversion conditions between pairs of strings $\ell_s,\ell_t < \ell_p$. To do this, we consider the following two diagrams (with other boxes suppressed).

	\begin{center}
	\begin{tikzpicture}[xscale=5,yscale=1.6]
\draw (-0.105,0.448)--(-0.105,1.548);
\draw (1-0.141,0.448)--(1-0.141,1.548);
		\node at (0,1) (T00) { $\tableau{ &  i \\ & j \\ & & x \\ & & j \\ & & i}$};
		\node at (1,1) (T10) {$\tableau{ & & i \\ & j\\ & & x \\ & & & i\\ & & j}$};
	\end{tikzpicture}
	\end{center}

We claim that if $x$ in row $r_1$ swaps with a box $y$ in row $r_2$, then any labels that appear between $x$ and $y$ have a smaller label than $y$. The diagram on the left gives an example of how there might be a larger label between $x$ and $y$. However, if $j>i$, then the inversion condition is violated between the boxes in positions $(2,4), (2,5),(3,1)$, which contradicts that our given diagram satisfies inversion conditions. 
 The right diagram shows the only way a swap might cause a trio of boxes that violates the inversion condition, with $j>i$, where a box from string $j$ remains below the box of string $i$ in the same column, but is moved weakly above a box of string $i$ in the next column to the right.
 
 The crux is how $x$ made it to that position. 
 If it was pushed left into that position, then it failed to swap with string $i$, so that cannot be possible. It could also have swapped with string $i$ into that position, but then prior to that swap, the $i$ in position $(3,3)$, the $j$ in position $(2,4)$, and the $i$ in column $2$ above the $j$ would violate the inversion condition.
 The last option is if $x$ swapped with some box $z$ with label $k$. However, by our previous claim, $k>i$, and then prior to $x$ and $z$ swapping, the inversion condition is not satisfied with $z$ in the position of $x$, which is again a contradiction.

\textbf{Proof of claim (4).} Proposition \ref{prop:left-just-and-crossing} shows that no string $\ell_s>\ell_p$ is changed while any string $\ell_1,\ldots,\ell_p$ is left justified, so boxes of string $\ell_s$ continue to satisfy the flagged condition because they did to begin with in $T$. Boxes of string $\ell_p$ can only move south or west while $U_{\alpha,p}$ is applied, so they must also continue to satisfy the flagged condition. Finally, the leftmost box of any string $\ell_t<\ell_p$ satisfies the flagged condition before $U_{\alpha,p}$ is applied by the inductive assumption. Unlock operators cannot change the position of the leftmost boxes of left justified strings, and such strings remain weakly decreasing from left to right by condition (2), so all boxes of strings $\ell_t<\ell_p$ must also satisfy the flagged condition through all steps of $U_{\alpha,p}$.

\end{proof}

Up to this point, we have been examining the consequences of the assumption that the Unlock algorithm is well defined on lock Kohnert tableaux and that it agrees with Rectification on the level of diagrams. We now show that this assumption indeed holds in general on lock Kohnert tableaux.

\begin{lemma}\label{lemma:unlock-well-defined}
Write $U_\alpha = u_{k_t}\circ\cdots\circ u_{k_1}$ and $R_\alpha = \ee_{k_t} \circ\cdots\circ \ee_{k_1}$. The function $U_\alpha$ is well defined and
$$\ee_{k_s} \circ\cdots\circ \ee_{k_1}(\mathbb{D}(T)) = \mathbb{D}(u_{k_s}\circ\cdots\circ u_{k_1}(T))$$
holds for all $1\leq s \leq t$.

\end{lemma}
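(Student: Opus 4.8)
The plan is to prove the two assertions simultaneously by induction on the step index $s$, that is, on the number of operators of $U_\alpha$ (equivalently $R_\alpha$) applied so far. Write $T_s = u_{k_s}\circ\cdots\circ u_{k_1}(T)$ and $D_s = \ee_{k_s}\circ\cdots\circ \ee_{k_1}(\mathbb{D}(T))$, with $T_0 = T$ and $D_0 = \mathbb{D}(T)$; the inductive hypothesis is that $T_{s-1}$ is defined and $\mathbb{D}(T_{s-1}) = D_{s-1}$. Under this hypothesis the assumption of Lemma~\ref{lemma:key-conditions} is met through step $s-1$, so claims (1)--(4) of that lemma are available through step $s-1$; moreover, the proof of claim (1) establishes it for the operator $u_{k_s}$ itself using only Corollary~\ref{cor:rect-nonzero}, Lemma~\ref{lemma:e-truncation}, and claim (2) at strictly earlier steps, so claim (1) is available for $u_{k_s}$ as well. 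This is what breaks the apparent circularity between Lemma~\ref{lemma:key-conditions} and the present lemma. Finally, Corollary~\ref{cor:rect-nonzero} gives $R_\alpha(\mathbb{D}(T))\neq 0$, and since a composite of rectification operators is nonzero only when every factor is, $\ee_{k_s}(D_{s-1})\neq 0$, i.e.\ $M^{k_s}(D_{s-1})>0$.

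First I would verify that $u_{k_s}$ is defined on $T_{s-1}$. For $u_{k_s}$ not to return $0$ there must be a box in column $k_s+1$ that is not left justified; were every box of column $k_s+1$ left justified, each would belong to a string also containing a weakly higher box in column $k_s$, so the boxes of column $k_s+1$ inject into weakly higher boxes of column $k_s$ and $M^{k_s}(D_{s-1},r)\le 0$ for all $r$, contradicting $M^{k_s}(D_{s-1})>0$. Thus $u_{k_s}$ selects a box $x$ of minimal label $\ell$. The swap phase terminates for the trivial reason that each pass through step $2$ strictly lowers the row index of $x$; when it stops, $x$ crosses no string, and by claim (1) of Lemma~\ref{lemma:key-conditions} the cell immediately to the left of $x$ is empty, so the concluding left push is legal. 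Hence $T_s$ is defined.

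The heart of the lemma is the diagram identity $\mathbb{D}(T_s) = D_s$. The decisive observation is that every swap during $u_{k_s}$ exchanges the rows of two boxes that both lie in column $k_s+1$, so the underlying diagram is unchanged throughout the swap phase; only the final left push alters it, deleting the cell $(k_s+1,r^\ast)$ and inserting $(k_s,r^\ast)$, where $r^\ast$ is the row $x$ occupies when the swaps stop. As $\ee_{k_s}$ acts on $D_{s-1}$ by deleting $(k_s+1,r_0)$ and inserting $(k_s,r_0)$, where $r_0$ is the largest row at which $M^{k_s}(D_{s-1},\cdot)$ is maximal, it suffices to show $r^\ast = r_0$. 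Freeze the algorithm just before the left push: the underlying diagram is still $D_{s-1}$, the box $x$ sits at $(k_s+1,r^\ast)$, and $x$ crosses no string. Truncating to labels $\le\ell$ leaves the algorithm's action on the strings below $\ell$, and hence the row at which $M^{k_s}$ is maximal, unchanged, by Lemma~\ref{lemma:e-truncation} and Corollary~\ref{cor:l-truncation}; in that truncation the strings below $\ell$ are left justified and weakly decreasing, the already-processed boxes of string $\ell$ are left justified, and (by the order in which $U_\alpha$ left justifies boxes, Proposition~\ref{prop:left-just-and-crossing}) $x$ is the only box of string $\ell$ in columns $k_s$ and $k_s+1$. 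One then computes $M^{k_s}$ contribution by contribution: string $\ell$ contributes $+1$ to $M^{k_s}(D_{s-1},r)$ exactly when $r\le r^\ast$ and $0$ otherwise; every left justified string contributes $\le 0$ for all $r$, and contributes $0$ at $r = r^\ast$ --- for strings reaching column $k_s+1$ because $x$ crosses no string, and for strings whose rightmost box lies in column $k_s$ by claim (1) of Lemma~\ref{lemma:key-conditions}. Collating these, $M^{k_s}(D_{s-1},r^\ast) = 1$, while $M^{k_s}(D_{s-1},r)\le 1$ for all $r$ and $M^{k_s}(D_{s-1},r)\le 0$ for $r>r^\ast$; hence $r^\ast$ is the largest maximizer, $r^\ast = r_0$, and $\mathbb{D}(T_s) = D_s$. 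This closes the induction; combined with Lemma~\ref{lemma:key-conditions} it then shows $U_\alpha(T)\in\KKT(\comp{a})$, as needed for Theorem~\ref{thm:dem-subcrystal}.

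The step I expect to be the main obstacle is the last one: turning the purely combinatorial stopping condition ``$x$ crosses no string'' into the numerical statement that $r^\ast$ is the largest row maximizing $M^{k_s}$. The swaps are designed so that $x$ ``falls'' precisely to the lowest horizontally unpaired cell of column $k_s+1$, but making this rigorous requires careful swap-by-swap control of how the collection of strings crossed by $x$ evolves, and then transporting that control through the truncation lemmas to the behavior of the function $M^{k_s}$. The remaining ingredients --- existence of the selected box, termination of the swap phase, and invariance of the underlying diagram during that phase --- are comparatively routine.
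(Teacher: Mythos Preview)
Your proposal is correct and follows essentially the same approach as the paper: induction on the step index, well-definedness via claim~(1) of Lemma~\ref{lemma:key-conditions} (noting, as you do, that its proof does not require the diagram equality at step $s$), and identification of the terminal row $r^\ast$ with the largest maximizer of $M^{k_s}$ after passing to the truncation $T'^{<\ell+1}$ via Lemma~\ref{lemma:e-truncation}. The only cosmetic difference is that the paper argues the row identification swap by swap, using each crossed string to iteratively lower an upper bound on $r_{\max}$ until reaching $r_t$, whereas you compute $M^{k_s}$ directly at the terminal position by summing string contributions; these are the same computation viewed from two ends.
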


\begin{proof}
We proceed by induction, noting that the following argument proves both the base case at $m=1$ and the inductive steps for $m>1$. Suppose that for some $m$, we have
$$\ee_{k_{s}} \circ\cdots\circ \ee_{k_1}(\mathbb{D}(T)) = \mathbb{D}(u_{k_{s}}\circ\cdots\circ u_{k_1}(T))$$
for all $1\leq s \leq m-1$, where $\ee_{k_{m-1}}\circ\cdots \circ \ee_{k_1}$ and $u_{k_{m-1}}\circ\cdots\circ u_{k_1}$ are the identity at $m=1$. We first show that $u_{k_m}$ is well defined on $u_{k_{m-1}}\circ\cdots\circ u_{k_1}(T) = T'$.

Proposition \ref{prop:left-just-and-crossing} shows that by construction, $u_{k_m}$ must have a box that it tries to push left, so the only way that it can not be well defined is if the box it attempts to push left is in a position where it is directly to the right of and in the same row as the rightmost box of a different string. In this case, there is nothing to swap with, but it still cannot be pushed left into an open space.
The proof of condition (1) of Lemma \ref{lemma:key-conditions} can be repeated here to show that this cannot happen (noting that the proof of condition (1) does not require the assumption that $u_{k_m}$ and $\ee_{k_m}$ agree on the level of diagrams), and therefore $u_{k_m}$ must be well defined on $T'$.

Now we check that 
$$\ee_{k_m}\circ\cdots\circ \ee_{k_1}(\mathbb{D}(T)) = \mathbb{D}(u_{k_m}\circ\cdots\circ u_{k_m}(T)).$$
Suppose that $u_{k_m}$ chooses a box $x$ to push left, with label $\ell$. 
Due to the weakly descending arrangement of labels in columns $k_m,k_m+1$ of $T^{\prime<\ell+1}$ as discussed above in the proof of condition (1) of Lemma \ref{lemma:key-conditions}, $\mathbb{D}(T'^{<\ell+1})$ has at most one horizontally unpaired box in column $k_{m+1}$, and it follows that we can at most have $M^{k_m}(T^{\prime<\ell+1}) = 1$, and if that maximum is acheived, it must be in the row containing the horizontally unpaired box.

Using Lemma \ref{lemma:e-truncation} and $\ee_{k_m}(T') \neq 0$, we know this maximum must be achieved somewhere. Let $r_0$ be the row of $x$ in $T'^{<\ell+1}$, and suppose $u_{k_m}$ swaps it to rows $r_1, r_2,\ldots,r_t$ before being pushed left. 
The descending arrangement of labels in columns $k_m,k_m+1$ means that a first upper bound for $r_{\max}$, the maximal row index such that $M^{k_m}(T^{\prime<\ell+1},r_{\max})=1$, is $r_0$. However, since $x$ swaps into row $r_1$, it must cross some string $\ell_{i_1}$ that has boxes at $(k_m,r_1')$ and $(k_m,r_1)$ with $r_1 < r_0 \leq r_1'$. Again using the descending arrangement of other labels, the string of $x$ is the only string that can cumulatively contribute $+1$ to $M^{k_m}(T^{\prime<\ell+1}, r)$, so since string $\ell_{i_1}$ cumulatively contributes $-1$ to $M^{k_m}(T^{\prime<\ell+1}, r)$ for all $r_1 < r \leq r_1'$, we must have $M^{k_m}(T^{\prime<\ell+1},r_0)\leq 0$. Therefore $r_1 < r_0$ gives a new upper bound on $r_{\max}$. Iterating this argument eventually gives an upper bound of $r_t$.

Now $x$ is in row $r_t$ and is not crossing any strings. Once again following the proof of condition (1) of Lemma \ref{lemma:key-conditions}, we get that all labels in columns $k_m,k_m+1$ that are above $x$ must have a box in both columns. Therefore, $M^{k_m}(T^{\prime<\ell+1},r_t)=1$ so the upper bound is achieved and $r_{\max} = r_t$. Then we have
$$\ee_{k_s}\circ\cdots\circ \ee_{k_1}(\mathbb{D}(T)) = \mathbb{D}(u_{k_s}\circ\cdots\circ u_{k_1}(T)),$$
which completes the proof of the inductive step.

\end{proof}

Combining Lemmas \ref{lemma:key-conditions} and \ref{lemma:unlock-well-defined} shows that the final diagram after applying the Unlock algorithm to a lock Kohnert tableau is a key Kohnert tableau of the same content and that the underlying diagram is the same as the one resulting from rectification. The rectification operators are weight-preserving, injective, and intertwine crystal operators on Kohnert diagrams, so Theorem \ref{thm:dem-subcrystal} follows, and we immediately obtain our result on the difference of a key and a lock polynomial.

\begin{corollary}
For $\aa$ a weak composition, the difference $\key_\aa - \lock_\aa$ is monomial positive.
\end{corollary}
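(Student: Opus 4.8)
The plan is to read the statement off directly from Theorem~\ref{thm:dem-subcrystal} together with the combinatorial descriptions of the two polynomial families as generating functions over Kohnert tableaux. Recall that $\key_\aa = \sum_{T \in \KKT(\aa)} x^{\wt(T)}$ and $\lock_\aa = \sum_{S \in \LKT(\aa)} x^{\wt(S)}$, and that Theorem~\ref{thm:dem-subcrystal} furnishes an injective, weight-preserving map $U_{\flat(\aa)} \colon \LKT(\aa) \hookrightarrow \KKT(\aa)$. The first step is to transport the lock sum along this map: because $U_{\flat(\aa)}$ preserves $\wt$, we have $\lock_\aa = \sum_{S \in \LKT(\aa)} x^{\wt(U_{\flat(\aa)}(S))} = \sum_{T \in U_{\flat(\aa)}(\LKT(\aa))} x^{\wt(T)}$, where the second equality uses injectivity so that each tableau in the image is counted exactly once and no monomial is over- or under-counted.

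The second step is simply to subtract. Since $U_{\flat(\aa)}(\LKT(\aa)) \subseteq \KKT(\aa)$, we obtain
\[
\key_\aa - \lock_\aa = \sum_{T \in \KKT(\aa)} x^{\wt(T)} - \sum_{T \in U_{\flat(\aa)}(\LKT(\aa))} x^{\wt(T)} = \sum_{T \in \KKT(\aa) \setminus U_{\flat(\aa)}(\LKT(\aa))} x^{\wt(T)},
\]
which is a sum of monomials each with coefficient $+1$, hence manifestly monomial positive. Injectivity of $U_{\flat(\aa)}$ is precisely what makes the subtracted set a genuine subset of $\KKT(\aa)$ and the set difference meaningful at the level of generating functions; weight preservation is what makes the cancellation term-by-term exact.

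I do not expect any real obstacle at this final stage: all of the substantive work is already packaged in Theorem~\ref{thm:dem-subcrystal}, which in turn rests on Lemmas~\ref{lemma:key-conditions} and~\ref{lemma:unlock-well-defined} and on the properties of the rectification operators. The one point to state carefully is that both generating-function sums are indexed by tableaux of \emph{content} $\aa$ and that $U_{\flat(\aa)}$ preserves content by construction, so the monomials being compared genuinely match up. As an optional closing remark one could note that, since the key crystal of $\aa$ is a subcrystal of a normal crystal and the image $U_{\flat(\aa)}(\LKT(\aa))$ is a subcrystal of it, the complement $\KKT(\aa) \setminus U_{\flat(\aa)}(\LKT(\aa))$ decomposes into crystal strings, giving a slightly finer structural picture, though this is not needed for monomial positivity.
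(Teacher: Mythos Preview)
Your proof is correct and follows exactly the same approach as the paper: the paper's proof simply observes that Unlock is an injective, weight-preserving map from $\LKT(\aa)$ to $\KKT(\aa)$, so $\key_\aa - \lock_\aa$ is the generating polynomial over $\KKT(\aa)$ with the image of Unlock removed. Your write-up is more detailed than the paper's one-sentence version, but the argument is identical.
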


\begin{proof}
Since Unlock provides an injective, weight-preserving map from lock Kohnert tableau to key Kohnert tableau, $\key_\aa - \lock_\aa$ is the generating polynomial of keys with the image of Unlock removed.
\end{proof}

\section*{Acknowledgements}

I am grateful to Sami Assaf for pointing me to this question and for the enlightening (and patient) conversations that followed and to Jim Haglund, Jongwon Kim, and Vasu Tewari for their support. The author was partially supported by the NSF Graduate Research Fellowship, DGE-1845298.

\bibliographystyle{amsalpha} 
\bibliography{locks}

\providecommand{\bysame}{\leavevmode\hbox to3em{\hrulefill}\thinspace}
\providecommand{\MR}{\relax\ifhmode\unskip\space\fi MR }
\providecommand{\MRhref}[2]{%
  \href{http://www.ams.org/mathscinet-getitem?mr=#1}{#2}
}
\providecommand{\href}[2]{#2}
\begin{thebibliography}{Dem74b}

\bibitem[AG19]{AG19}
Sami Assaf and Nicolle Gonz\'alez, \emph{Demazure crystals for specialized
  nonsymmetric macdonald polynomials}, arXiv:1901.07520v2.

\bibitem[AS]{AS17}
Sami Assaf and Dominic Searles, \emph{Kohnert polynomials}, Experimental
  Mathematics, to appear, arXiv:1711.09498v2.

\bibitem[AS18a]{AS18a}
Sami Assaf and Anne Schilling, \emph{A {D}emazure crystal construction for
  {S}chubert polynomials}, Algebraic Combinatorics \textbf{1} (2018), no.~2,
  225--247.

\bibitem[AS18b]{AS18b}
Sami Assaf and Dominic Searles, \emph{{K}ohnert tableaux and a lifting of
  quasi-{S}chur functions}, J. Combin. Theory Ser. A \textbf{156} (2018),
  85--118.

\bibitem[Dem74a]{Dem74a}
Michel Demazure, \emph{D\'esingularisation des vari\'et\'es de {S}chubert
  g\'en\'eralis\'ees}, Ann. Sci. \'Ecole Norm. Sup. (4) \textbf{7} (1974),
  53--88, Collection of articles dedicated to Henri Cartan on the occasion of
  his 70th birthday, I.

\bibitem[Dem74b]{Dem74b}
\bysame, \emph{Une nouvelle formule des caract\`eres}, Bull. Sci. Math. (2)
  \textbf{98} (1974), no.~3, 163--172.

\bibitem[Kas91]{Kas91}
Masaki Kashiwara, \emph{On crystal bases of the {$Q$}-analogue of universal
  enveloping algebras}, Duke Math. J. \textbf{63} (1991), no.~2, 465--516.

\bibitem[Kas93]{Kas93}
\bysame, \emph{The crystal base and {L}ittelmann's refined {D}emazure character
  formula}, Duke Math. J. \textbf{71} (1993), no.~3, 839--858.

\bibitem[KN94]{KN94}
Masaki Kashiwara and Toshiki Nakashima, \emph{Crystal graphs for
  representations of the {$q$}-analogue of classical {L}ie algebras}, J.
  Algebra \textbf{165} (1994), no.~2, 295--345.

\bibitem[Koh91]{Koh91}
Axel Kohnert, \emph{Weintrauben, {P}olynome, {T}ableaux}, Bayreuth. Math. Schr.
  (1991), no.~38, 1--97, Dissertation, Universit{\"a}t Bayreuth, Bayreuth,
  1990.

\bibitem[Lit95]{Lit95}
Peter Littelmann, \emph{Crystal graphs and {Y}oung tableaux}, J. Algebra
  \textbf{175} (1995), no.~1, 65--87.

\bibitem[Mac91]{Mac91}
I.~G. Macdonald, \emph{Notes on {S}chubert polynomials}, LACIM, Univ. Quebec a
  Montreal, Montreal, PQ, 1991.

\bibitem[Mac95]{Mac95}
\bysame, \emph{Symmetric functions and {H}all polynomials}, second ed., Oxford
  Mathematical Monographs, The Clarendon Press, Oxford University Press, New
  York, 1995, With contributions by A. Zelevinsky, Oxford Science Publications.

\bibitem[Sta99]{EC2}
Richard~P. Stanley, \emph{Enumerative combinatorics. {V}ol. 2}, Cambridge
  Studies in Advanced Mathematics, vol.~62, Cambridge University Press,
  Cambridge, 1999, With a foreword by Gian-Carlo Rota and appendix 1 by Sergey
  Fomin.

\end{thebibliography}
\end{document}